\documentclass[10pt]{amsart}
\usepackage{latexsym, amsmath,amssymb}

\setlength\evensidemargin{.5in}
\setlength\textheight{44cc} \setlength\textwidth{30cc}
\setlength\topmargin{0in} \setlength\parskip{5pt}
\renewcommand{\epsilon}{\varepsilon}


\newtheorem{theorem}{Theorem}

\newtheorem{lemma}[theorem]{Lemma}
\newtheorem{corr}[theorem]{Corollary}

\newtheorem{proposition}[theorem]{Proposition}
\newtheorem{deff}[theorem]{Definition}

\newcommand{\bth}{\begin{theorem}}
	\newcommand{\ble}{\begin{lemma}}
		\newcommand{\bcor}{\begin{corr}}

			\newcommand{\bdeff}{\begin{deff}}

				\newcommand{\bprop}{\begin{proposition}}
					\newcommand{\ele}{\end{lemma}}
				\newcommand{\ecor}{\end{corr}}
			\newcommand{\edeff}{\end{deff}}
		
		\newcommand{\eprop}{\end{proposition}}

	\newcommand{\Rn}{{\mathbb R}^n}
	
	\newcommand{\la}{\lambda}
	
	\newcommand{\eps}{\varepsilon}
	\newcommand{\e}{\varepsilon}

	\renewcommand{\Pi}{\varPi}

	\renewcommand{\epsilon}{\varepsilon}

	\newcommand{\R}{{\mathbb R}}
	\newcommand{\ls}{\lesssim}
	\newcommand{\gs}{\gtrsim}
	\newcommand{\1}{{\rm 1\hspace*{-0.4ex}%
			\rule{0.1ex}{1.52ex}\hspace*{0.2ex}}}
	
	\newcommand{\ola}{\1_\la}

	\numberwithin{equation}{section}
	
	\begin{document}
		\title{Pointwise Weyl Laws for Schr\"odinger operators with singular potentials}
		\keywords{Eigenfunctions, Weyl law, spectrum}
		\subjclass[2010]{58J50, 35P15}
	\author[]{Xiaoqi Huang}
\address[X.H.]{Department of Mathematics, University of Maryland, College Park, MD, 20742}
\email{xhuang49@umd.edu}

\author[]{Cheng Zhang}
\address[C.Z.]{Mathematical Sciences Center, Tsinghua University, Beijing, China}
\email{czhang98@tsinghua.edu.cn}
		\begin{abstract}
			We consider the Schr\"odinger operators $H_V=-\Delta_g+V$ with singular potentials $V$ on general $n$-dimensional Riemannian manifolds and study the eigenvalues and eigenfunctions under this perturbation. These singular  potentials  appear naturally in physics, most notably the Coulomb potential $|x|^{-1}$. Sogge and the first author \cite{hs} proved the sharp Weyl laws for these $H_V$ with potentials in the Kato class, which is the minimal assumption to ensure that $H_V$ is essentially self-adjoint and bounded from below and the eigenfunctions of $H_V$ are bounded. Later, Frank-Sabin \cite{fs} studied the problem on the pointwise Weyl laws for these $H_V$  in three dimensions by extending the method of Avakumovi\'c \cite{Avakumovic}, while it is unknown how to reconstruct this argument in other dimensions. In this paper, we completely solve this problem in any dimensions by using a different argument. First, we establish the pointwise Weyl law for potentials in the Kato class on any $n$-dimensional manifolds. This extends the 3-dimensional results of Frank-Sabin \cite{fs} by a different method. Second, we prove that the pointwise Weyl law with the standard sharp error term $O(\la^{n-1})$ holds for potentials in $L^n(M)$. This extends the classical results for smooth potentials by Avakumovi\'c \cite{Avakumovic}, Levitan \cite{Levitan} and H\"ormander \cite{HSpec} to critically singular potentials. In three dimensions, this $L^3$ condition also naturally appears in Boccato-Brennecke-Cenatiempo-Schlein \cite{bbcs} on the ground state energy of the Hamilton operator in the Gross-Pitaevskii regime. These two results are sharp, and our proof exploits Li-Yau's heat kernel bounds and Blair-Sire-Sogge's eigenfunction estimates.
		\end{abstract}
		\maketitle
		The purpose of this paper is to study the $ pointwise $ Weyl Law for the Schr\"odinger operators $H_V=-\Delta_g+V$ on compact $n$-dimensional $(n\ge2)$ Riemannian manifolds $(M,g)$ without boundary. We shall assume throughout that the potentials $V$ are real-valued. Moreover, we shall assume that $V\in \mathcal{K}(M)$, which is the Kato class. Recall that $\mathcal{K}(M)$ is all $V$ satisfying  
			\[\lim_{\delta\to 0}\sup_{x\in M}\int_{d_g(y,x)<\delta}|V(y)|W_n(d_g(x,y))dy=0,\]
		where \[W_n(r)=\begin{cases}r^{2-n},\quad\quad\quad\quad n\ge3\\
			\log(2+r^{-1}),\ \ n=2\end{cases}\]
		and $d_g$, $dy$ denote geodesic distance, the volume element on $(M,g)$.
		For later use, note that $L^{p}(M)\subset \mathcal{K}(M)\subset L^1(M)$ for all $p>\frac n2$. The Kato class $\mathcal{K}(M)$ and $L^{n/2}(M)$ share the same critical scaling
		behavior, while neither one is contained in the other one for $n\ge3$. For instance, singularities of the type $|x|^{-\alpha}$ for $\alpha<2$ are allowed for both classes. These singular  potentials  appear naturally in physics, most notably the Coulomb potential $|x|^{-1}$ in three dimensions. See e.g. Simon \cite{SimonSurvey} for a detailed introduction to the Schr\"odinger operators with potentials in the Kato class and their physical  motivations.
	
	As was shown in \cite{BSS} (see also \cite{SimonSurvey})
	the assumption that $V$ is in the Kato class is needed
	to ensure that  the Schr\"odinger operator $H_V$ is essentially self-adjoint and bounded from below, and the eigenfunctions of $H_V$ are bounded, which is an obvious requirement for a pointwise Weyl law to hold.  Although  the Schr\"odinger operators with potentials in $L^{n/2}(M)$ are also self-adjoint and bounded from below for $n\ge3$, the eigenfunctions for these potentials need not be bounded. Moreover, for potentials in the Kato class, the associated
	eigenfunctions are continuous by the heat kernel estimates of Li-Yau  \cite{Liyau} and Sturm \cite{sturm}.  Since $M$ is compact, the spectrum of $H_V$ is discrete. Assuming, as we may, that
	$H_V$ is a positive operator, we shall write the spectrum
	of $\sqrt{H_V}$  as
	\begin{equation}\label{1.5}
		\{\tau_k\}_{k=1}^\infty,
	\end{equation}
	where the eigenvalues, $\tau_1\le \tau_2\le \cdots$, are arranged in increasing order and we account for multiplicity.  For each $\tau_k$ there is an 
	eigenfunction $e_{\tau_k}\in \text{Dom }(H_V)$ (the domain of $H_V$) so that
	\begin{equation}\label{1.6}
		H_Ve_{\tau_k}=\tau^2_ke_{\tau_k}.
	\end{equation} 
	We shall always assume that the eigenfunctions are
	$L^2$-normalized, i.e.,
	$$\int_M |e_{\tau_k}(x)|^2 \, dx=1.$$
	
	After possibly adding a constant to $V$ we may, and shall, assume throughout that $H_V$ is bounded below by one, i.e.,
	\begin{equation}\label{1.7}
		\|f\|_2^2\le \langle \, H_Vf, \, f\, \rangle, \quad
		f\in \text{Dom }(H_V).
	\end{equation}
	Also, to be consistent, we shall let
	\begin{equation}\label{1.8}
		H^0=-\Delta_g
	\end{equation}
	be the unperturbed operator.  The corresponding eigenvalues and associated $L^2$-normalized
	eigenfunctions are denoted by $\{\lambda_j\}_{j=1}^\infty$ and $\{e^0_j\}_{j=1}^\infty$, respectively so
	that
	\begin{equation}\label{1.9}
		H^0e^0_j=\lambda^2_j e^0_j, \quad \text{and }\, \, 
		\int_M |e^0_j(x)|^2 \, dx=1.
	\end{equation}

	Both $\{e_{\tau_k}\}_{k=1}^\infty$ and $\{e^0_j\}_{j=1}^\infty$ are orthonormal bases for $L^2(M)$.  Recall (see e.g. \cite{SoggeHangzhou}) that if $N^0(\la)$ denotes the 
	Weyl counting function for $H^0$ then one has the 
	sharp Weyl law
	\begin{equation}\label{1.10}
		N^0(\la):=\# \{j: \, \la_j\le \la\}=(2\pi)^{-n} \omega_n \text{Vol}_g(M) \, \la^n
		\, +\, O(\la^{n-1}), 
	\end{equation}
	where $\omega_n=\pi^{\frac n2}/\Gamma(\frac n2+1)$ denotes the volume of the unit ball in
	$\Rn$ and $\text{Vol}_g(M)$ denotes the Riemannian volume of $M$.  This result is due to Avakumovi\'{c}~\cite{Avakumovic} and Levitan~\cite{Levitan}, and it was generalized to general self-adjoint elliptic pseudo-differential operators
	by H\"ormander~\cite{HSpec}.  The sharpness of \eqref{1.10} means that it
	cannot be improved for the standard sphere. The original Weyl law  was proved by Weyl \cite{weyl} for a compact domain in $\mathbb{R}^n$ over a hundred years ago. See Arendt, Nittka, Peter and Steiner \cite{anps} for historical background on this famous problem and its solution by Weyl.

	 Recall that 
	\begin{equation}\label{2.1'}
		N^0(\la):=\# \{j: \, \la_j\le \la\} =
		\int_M \sum_{\la_j\le \la} \, |e_{j}^0(x)|^2 \, dx.
	\end{equation}The  Weyl law \eqref{1.10} can be obtained from the following sharp pointwise Weyl law 
\begin{equation}\label{local}
	\sum_{\la_j\le\la}|e_j^0(x)|^2=(2\pi)^{-n}\omega_n\la^n+O(\la^{n-1}).
\end{equation}
It is due to Avakumovi\'{c}~\cite{Avakumovic}, following earlier partial results of Levitan~\cite{Levitan}, \cite{Levitan2}. The error term $O(\la^{n-1})$ is also sharp on the standard sphere. Proofs are presented in several texts, including H\"ormander~\cite{hbook} and Sogge \cite{fio}, \cite{SoggeHangzhou}. The pointwise Weyl law for a  compact domain in $\mathbb{R}^n$ is due to Carleman \cite{carl}. Similar results for compact manifolds with boundary are due to Seeley \cite{seeley1}, \cite{seeley2}.

	Recently, Huang-Sogge \cite{hs} proved that if $V\in\mathcal{K}(M)$, then the sharp Weyl law of the same form still holds for the Schr\"odinger operators $H_V$, i.e.
	\begin{equation}\label{1.12}
		N_V(\la):=\#\{k: \, \tau_k\le \la\}=(2\pi)^{-n}\omega_n \mathrm{Vol}_g(M) \, \la^n
		+O(\la^{n-1}).
	\end{equation}
See also \cite{sob}, \cite{fs}. Note that
\begin{equation}\label{2.1}
	N_V(\la):=\# \{k: \, \tau_k\le \la\} =
	\int_M \sum_{\tau_k\le \la} \, |e_{\tau_k}(x)|^2 \, dx.
\end{equation}
For $H_V$ with smooth potentials, the pointwise Weyl law of the form \eqref{local} follows from H\"ormander~\cite{HSpec}. So it is natural to study the pointwise Weyl law for $H_V$ with singular potentials.

Let $P^0=\sqrt{H^0}$ and $P_V=\sqrt{H_V}$. We denote the indicator function of the interval $[-\la,\la]$ by $\ola(\tau)$, and write 
\[\ola(P^0)(x,x)=\sum_{\la_j\le\la}|e_j^0(x)|^2,\ \ \ola(P_V)(x,x)=\sum_{\tau_k\le\la}|e_{\tau_k}(x)|^2.\]
When $n=3$, Frank-Sabin \cite{fs} proved that if $V\in \mathcal{K}(M)$, then as $\la\to\infty$ and uniformly in $x\in M$
 \begin{equation}\label{localfs}
 \ola(P_V)(x,x)=(2\pi)^{-3}\omega_3\la^3+o(\la^{3}).
 \end{equation}
They pointed out that the error term can not be replaced by $O(\la^{3-\delta})$, for any $\delta>0$. Moreover, \cite{fs} proved that if $V$ satisfies a stronger condition, the sharp pointwise law may hold. Indeed, if $V$ satisfies for some $\eps'>0$
\begin{equation}\label{stka}
	\sup_{x\in M}\int_{d_g(y,x)<\eps'}\frac{|V(y)|}{d_g(y,x)^2}dy<\infty,\end{equation}
then uniformly in $x\in M$
	\begin{equation}\label{sharplocal}\ola(P_V)(x,x)=(2\pi)^{-3}\omega_3\la^3+O(\la^{2}).\end{equation}
Note that  the condition \eqref{stka} is satisfied by $V\in L^{q}(M)$, for any $q>3$. For comparison, for any $q<3$, they showed that the sharp pointwise Weyl law \eqref{sharplocal} fails to hold for some $V\in L^q$. So $q=3$ is the threshold for the validity of the sharp pointwise Weyl law on the $L^q$ scale. The proof of \cite{fs} extends the method of Avakumovi\'c \cite{Avakumovic}, which relies on Tauberian theorems and parametrix estimates. To our knowledge, it is unknown how to reconstruct this argument in other dimensions, see \cite[Remark 4.5]{fs}. So it is an interesting open problem to determine the pointwise Weyl law for the Schr\"odinger operators with critically singular potentials on general $n$-dimensional manifolds.

In this paper, we completely solve this open problem in any dimensions. Our proof extends the wave equation method in \cite{hs}, \cite{SoggeHangzhou} to get around the difficulties in \cite{fs}.
	\begin{theorem}\label{thm}
			Let $n\ge2$ and $V\in \mathcal{K}(M)$. Then for any fixed $\eps>0$ there exists a $\Lambda(\eps,V)<\infty$ such that for $\lambda>\Lambda(\eps,V)$, we have
			\[\sup_{x\in M}|\ola(P_V)(x,x)-(2\pi)^{-n}\omega_n\la^n|\le C_V\eps\la^n.\]
			Here $C_V>0$ is a constant independent of $\la$ and $\eps$.
		\end{theorem}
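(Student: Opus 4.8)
The plan is to follow the wave equation method of Avakumović--Levitan--Hörmander as adapted to Kato potentials in \cite{hs}, but to extract the weaker (but pointwise) statement uniformly in $x$ rather than the integrated sharp Weyl law. Fix $\chi\in\Coi(\R)$ even with $\chi(0)=1$ and $\hat\chi$ supported in a small neighborhood of the origin where the half-wave propagator $e^{itP_V}$ has a good parametrix/finite-propagation structure. The starting point is the smoothed spectral function
\[
\bigl(\chi(T^{-1}(\la-P_V))\1_{(-\infty,\la]}\bigr)?\quad\text{more precisely}\quad \sum_k \bigl|e_{\tau_k}(x)\bigr|^2\,\chi\bigl(T^{-1}(\la-\tau_k)\bigr),
\]
which by Fourier inversion equals $\tfrac{T}{2\pi}\int \hat\chi(Tt)\,e^{it\la}\,\bigl(e^{-itP_V}\bigr)(x,x)\,dt$. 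The first step is to show that, for $T$ fixed and small, this smoothed counting function satisfies
\[
\sum_k \bigl|e_{\tau_k}(x)\bigr|^2\,\chi\bigl(T^{-1}(\la-\tau_k)\bigr)=\frac{d}{d\la}\Bigl(\frac{\omega_n}{(2\pi)^n}\la^n\Bigr)+o(\la^{n-1})
\]
uniformly in $x$, with the $o$ depending on $T$ and $V$. This is exactly where the Kato hypothesis enters: one writes $e^{-itP_V}$ via the Duhamel/resolvent expansion in terms of $e^{-itP^0}$, and the Kato condition
$\lim_{\delta\to0}\sup_x\int_{d_g(y,x)<\delta}|V(y)|W_n(d_g(x,y))\,dy=0$
is precisely what makes the iterated potential terms contribute $o(\la^{n-1})$ to the diagonal (the same estimates already appear in \cite{hs}, \cite{fs}); the leading term comes from the free parametrix for $e^{-itP^0}$ localized by $\hat\chi$.

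The second step is the Tauberian passage from the smoothed sum back to $\1_\la(P_V)(x,x)$. The standard Tauberian lemma (see \cite{SoggeHangzhou}) gives, for a nondecreasing function, that control of its $\chi(T^{-1}(\la-\cdot))$-average up to $o(\la^{n-1})$ yields control of the function itself up to $C\,T^{-1}\la^{n-1}$ plus the same $o$-term, \emph{provided} one has an a priori polynomial bound $\sum_{\tau_k\in[\la,\la+1]}|e_{\tau_k}(x)|^2 = O(\la^{n-1})$ uniformly in $x$. This a priori bound for $V\in\mathcal K(M)$ is available from \cite{hs} (it is the pointwise analogue of the local Weyl law and rests on the same Kato-class resolvent/heat-kernel estimates). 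Combining, for each fixed $T$ we obtain
\[
\sup_{x\in M}\Bigl|\1_\la(P_V)(x,x)-\frac{\omega_n}{(2\pi)^n}\la^n\Bigr|\le \frac{C}{T}\la^{n-1}+\rho_T(\la)\,\la^{n-1},
\]
where $\rho_T(\la)\to0$ as $\la\to\infty$ for each fixed $T$, and $C$ is independent of $T$ and $V$ (after the normalization \eqref{1.7}). Dividing by $\la^n$: the first term is $\le C/(T\la)$ and the second is $\le\rho_T(\la)/\la$, both of which we can make $\le C_V\eps$ by first choosing $T=T(\eps)$ large, then $\Lambda(\eps,V)$ large enough that $C/(T\la)+\rho_T(\la)/\la\le C_V\eps$ for $\la>\Lambda$. (In fact the $\la^{-1}$ gains show the bound is really $o(\la^{n-1})$, which is consistent with Theorem~\ref{thm}, phrased with the weaker $\eps\la^n$.)

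The main obstacle is the \textbf{first step}: establishing the smoothed asymptotics with an $o(\la^{n-1})$ error \emph{uniformly in $x$} when $V$ is merely Kato. One cannot expand $e^{-itP_V}$ too naively because $V$ is not bounded; the right device is to work with $(H_V+\mu)^{-N}$ or with the heat semigroup $e^{-tH_V}$ (whose kernel obeys Gaussian bounds under the Kato hypothesis) to gain regularity, insert the Kato smallness, and only then pass to the wave group on the relevant frequency window $|\tau-\la|\lesssim T$. Handling the remainder of the Duhamel series --- showing the tail is summable and each term is $o(\la^{n-1})$ uniformly --- is the technical heart, and it is where the $W_n$ weight in the definition of $\mathcal K(M)$ is used in an essential way, via bounds of the form $\sup_x\int |V(y)|\,|(e^{-itP^0})(x,y)|\,dy$ integrated against $\hat\chi(Tt)$. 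Once this is in place, the Tauberian and $\eps$-juggling steps are routine.
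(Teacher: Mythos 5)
Your overall architecture (smooth the spectral function with a multiplier built from a short-time wave propagator, compare with the free case, then pass back to $\1_\la(P_V)(x,x)$ via a Tauberian step plus an a priori band bound) is the same as the paper's, but two of your intermediate claims are quantitatively wrong for merely Kato-class $V$, and they are exactly the claims your argument leans on. First, the a priori bound $\sum_{\tau_k\in[\la,\la+1]}|e_{\tau_k}(x)|^2=O(\la^{n-1})$ is \emph{not} available for general $V\in\mathcal K(M)$: the paper invokes it (Lemma \ref{bssbound}, from \cite{BSS}) only under the stronger hypothesis $V\in\mathcal K(M)\cap L^{n/2}(M)$, in the proof of Theorem \ref{thm2}. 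Likewise, your claimed $o(\la^{n-1})$ error for the unit-scale smoothed asymptotics cannot hold in general: the Frank--Sabin example $V=\gamma\chi(d_g(x,x_0))d_g(x,x_0)^{\eta-2}$ is Kato-class and produces a pointwise Weyl error $\approx\la^{n-\eta}$, which forces either the smoothed sum or some unit band to carry $\gg\la^{n-1}$ mass. The paper works at the coarser frequency scale $\eps\la$ throughout: the multiplier $h$ in \eqref{defin} transitions over a band of width $\eps\la$, the band bound it proves and uses is only $\1_{[\la,\la+\eps\la)}(P_V)(x,x)=O(\eps\la^n)$ (deduced from Lemma \ref{chi} and Lemma \ref{sogge888}, not assumed), and the Tauberian step is run over the dyadic-in-$(1+\eps)$ partition, which is why the conclusion is $O(\eps\la^n)$ and nothing sharper. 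Your $T$-bookkeeping is also internally inconsistent: with $\hat\chi(T\cdot)$ supported near the origin, large $T$ means a frequency band of width $T$, so the Tauberian loss grows like $T\la^{n-1}$ rather than $C\la^{n-1}/T$.

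Second, the step you yourself flag as "the main obstacle" --- the uniform comparison of the perturbed and free smoothed kernels on the diagonal --- is the entire content of Lemmas \ref{hdiff} and \ref{chi}, and your proposed route (an iterated Duhamel/resolvent expansion of $e^{-itP_V}$ with a summable tail) is not what the paper does and is precisely what is problematic for unbounded Kato potentials. The paper applies Duhamel \emph{once}, obtaining the exact bilinear identity
\[
h(P_V)(x,y)-h(P^0)(x,y)=\sum_{\la_j}\sum_{\tau_k}\int_M\frac{h(\la_j)-h(\tau_k)}{\la_j^2-\tau_k^2}\,e_j^0(x)e_j^0(z)e_{\tau_k}(z)e_{\tau_k}(y)V(z)\,dz,
\]
with no series and no remainder, and then spends Section 2 on a frequency decomposition of the mixed sums ($\tau_k\le 10\la$ versus $\tau_k>10\la$, with further dyadic splittings), controlling each piece by pseudodifferential kernel bounds (Lemma \ref{pdo}), the rough heat-kernel eigenfunction bound (Corollary \ref{rough}), and heat-kernel estimates for $H_V$ (Lemma \ref{heatk}), with the Kato condition entering through $\int|V(y)|W_n(d_g(x,y))\,dy$. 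Without that decomposition, or some substitute for it, your proposal does not constitute a proof.
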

So for potentials in the Kato class, as $\la\to\infty$ and uniformly in $x\in M$,
\begin{equation}\label{0.14}
		\sum_{\tau_k\le\la}|e_{\tau_k}(x)|^2=(2\pi)^{-n}\omega_n\la^n+o(\la^n).
		\end{equation}The 3-dimensional case is due to Frank-Sabin \cite{fs}, while other dimensions of Theorem \ref{thm} are new. 
			\begin{theorem}\label{thm2}
			Let $n\ge2$ and $V\in L^n(M)$. Then for $\la\ge1$
			\[\sup_{x\in M}|\ola(P_V)(x,x)-(2\pi)^{-n}\omega_n\la^n|\le C_V\la^{n-1}.\]
			Here $C_V>0$ is a constant independent of $\la$.
		\end{theorem}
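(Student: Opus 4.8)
The plan is to upgrade the $o(\la^n)$ bound of Theorem \ref{thm} to the sharp $O(\la^{n-1})$ remainder under the stronger hypothesis $V\in L^n(M)$, by running the wave-equation argument quantitatively rather than qualitatively. Following the scheme of \cite{hs} and \cite{SoggeHangzhou}, one fixes an even function $\rho\in\Coi(\R)$ with $\rho(0)=1$ and $\widehat\rho$ supported in a small interval $(-\eps_0,\eps_0)$ smaller than the injectivity radius, and studies the smoothed spectral cluster operator $\widehat\rho(T(\la-P_V))$ for a large fixed constant $T$. The key comparison is between the reproducing kernels of $H_V$ and $H^0$: writing $E_V$ and $E_0$ for the corresponding spectral projections, one uses Duhamel's formula for the wave (or half-wave) propagators $e^{it P_V}$ versus $e^{itP_0}$, so that
\[
\bigl(e^{itP_V}-e^{itP_0}\bigr)=-i\int_0^t e^{i(t-s)P_V}\,(P_V-P_0)\,e^{isP_0}\,ds,
\]
and then controls $(P_V-P_0)$ via the standard operator identity $P_V-P_0=\tfrac1\pi\int_0^\infty (\mu^2+H_V)^{-1}\,V\,(\mu^2+H^0)^{-1}\,d\mu$ together with the resolvent bounds available for Kato (indeed $L^n$) potentials. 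The upshot, exactly as in \cite{hs}, is that the on-diagonal kernel of $\widehat\rho(T(\la-P_V))$ differs from that of $\widehat\rho(T(\la-P^0))$ by $O_T(\la^{n-1})$ uniformly in $x$, while the unperturbed kernel is $\tfrac{\omega_n}{(2\pi)^n}n\la^{n-1}\bigl/T + O(\la^{n-2})$ by the classical stationary phase / Hörmander parametrix computation. This handles the "smoothed" version of the law with a genuine $O(\la^{n-1})$ error.

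The second half of the argument is the Tauberian step: passing from the smoothed counting kernel $\widehat\rho(T(\la-P_V))(x,x)$ back to the sharp cutoff $\ola(P_V)(x,x)$. The standard Tauberian lemma (see \cite{SoggeHangzhou}) says that if $d N_V^x(\tau) := \sum_k |e_{\tau_k}(x)|^2 \,\delta(\tau-\tau_k)$ is a nonnegative measure whose smoothed average $\rho_T * dN_V^x(\la)$ satisfies $\rho_T*dN_V^x(\la)\le C\la^{n-1}$ uniformly, then
\[
\Bigl|\,\ola(P_V)(x,x) - \tfrac{1}{T}\!\!\int_{-\infty}^{\la}\!\!\rho_T*dN_V^x(\tau)\,d\tau\,\Bigr|\le C'\,\la^{n-1}/T,
\]
and here the remaining integral reproduces the Weyl main term with $O(\la^{n-1})$ error from the smoothed computation above. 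The nonnegativity and the a priori bound $\rho_T*dN_V^x(\la)=O(\la^{n-1})$ are exactly the content of the smoothed estimate with $\rho\ge0$ near the origin (one may square: $|\widehat\rho|^2$-type kernels are nonnegative), so no new input is needed beyond what the wave-equation method already supplies. Where $L^n$ (rather than merely Kato) is essential is in getting the \emph{uniform} constants with the correct power of $\la$: the scale-invariance of $\|V\|_{L^n}$ matches the natural rescaling $\la\mapsto \la x$ in the parametrix, so that the Duhamel error terms, when estimated via the uniform Sobolev / resolvent estimates of Kenig–Ruiz–Sogge type on the manifold (as used in \cite{hs}, \cite{fs}), contribute $\|V\|_{L^n}\cdot O(\la^{n-1})$ rather than a worse power; for $V\in L^q$ with $q<n$ one loses a power of $\la^{n/q-1}>\la^0$ and the sharp remainder genuinely fails, consistent with the counterexamples recalled in the excerpt.

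The main obstacle I expect is precisely this last point: obtaining the \emph{resolvent/parametrix bounds with $\la$-uniform constants} on a general compact manifold for $V$ only in $L^n$. Concretely, one needs estimates of the form $\|(P_0+i0-\la)^{-1} V\,(\text{frequency-localized to }|\tau|\sim\la)\|_{L^2\to L^2}\lesssim \la^{-1}\|V\|_{L^n}$ — a manifold analogue of the Stein–Tomas/Kenig–Ruiz–Sogge bound $\| (-\Delta-\la^2)^{-1}\|_{L^{2n/(n+2)}\to L^{2n/(n-2)}}\lesssim \la^{-2}$, which for $n\ge3$ is the borderline case and requires care with the endpoint Lorentz-space refinements (and for $n=2$ must be replaced by a logarithmically-modified statement, hence the $W_n$ with the log in the Kato definition). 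The geometry contributes lower-order errors through the parametrix that are harmless, but controlling the Duhamel iteration to \emph{all orders} (a Neumann series in $V$) so that the total error stays $O(\la^{n-1})$ uniformly — rather than just the first Born term — is the technical heart of the matter, and is where the bulk of the work in the full proof will go. Once these uniform estimates are in hand, the Tauberian deduction and the main-term computation are routine and identical to the unperturbed case.
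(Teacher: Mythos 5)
Your high-level scheme (smooth the spectral cutoff, compare with the free operator via Duhamel, then recover the sharp cutoff by a Tauberian/spectral-projection argument) matches the paper's, and you correctly identify the decisive analytic input: the $L^2\to L^{\frac{2n}{n-2}}$ spectral cluster bound paired by H\"older against $\|V\|_{L^n}$, which is exactly how the paper exploits $V\in L^n$ (via Lemma \ref{delta2} and the estimates \eqref{sper}, \eqref{sobo}). However, there is a genuine gap, and it is located precisely where you defer the work. First, the difficulty you flag --- summing a Neumann series in $V$ to all orders, and controlling $P_V-P_0$ through a resolvent identity --- does not arise in the actual argument: the paper applies Duhamel to the second-order equation for $\cos tP_V-\cos tP^0$ with perturbation $H_V-H^0=V$ (not $P_V-P_0$), keeping the \emph{perturbed} propagator inside the integral, so the identity is exact after a single application and yields the closed formula
\begin{equation*}
h(P_V)(x,y)-h(P^0)(x,y)=\sum_{\la_j}\sum_{\tau_k}\int_M\frac{h(\la_j)-h(\tau_k)}{\la_j^2-\tau_k^2}\,e_j^0(x)e_j^0(z)e_{\tau_k}(z)e_{\tau_k}(y)V(z)\,dz .
\end{equation*}
No iteration or Born expansion is needed. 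Second, the actual technical heart --- which your proposal does not address at all --- is estimating this ``mixed'' bilinear sum involving two incommensurable families of frequencies $\la_j$ and $\tau_k$. This requires the frequency trichotomy $\tau_k<\la/2$, $\la/2\le\tau_k\le10\la$, $\tau_k>10\la$; a further dyadic decomposition in $|\la_j-\tau_k|$ in the middle range with the kernels $K_{\tau_k,\ell}^{\pm}$, $R_{\tau_k,\ell}$ of Lemma \ref{K3}; heat-kernel bounds (Lemma \ref{heatk}) to tame the high-frequency part via the representation $(\la_j^2-\tau_k^2)^{-1}=-\int_0^\infty e^{t(\la_j^2-\tau_k^2)}dt$; and the symbol-calculus kernel bounds of Lemma \ref{pdo}. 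Without some version of these steps the $O(\la^{n-1})$ bound for the smoothed difference (Lemma \ref{hdiff'}) is not established. Finally, your Tauberian step silently assumes the a priori bound $\1_{[\la,\la+1)}(P_V)(x,x)=O(\la^{n-1})$ for the \emph{perturbed} operator; this is not free and is imported in the paper as Lemma \ref{bssbound} from \cite{BSS}.
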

		In other words, for potentials in $L^n(M)$, uniformly in $x\in M$,
		\begin{equation}\label{0.15}
		\sum_{\tau_k\le\la}|e_{\tau_k}(x)|^2=(2\pi)^{-n}\omega_n\la^n+O(\la^{n-1}).\end{equation} Theorem \ref{thm2} is new in all dimensions. In three dimensions, either \eqref{stka} or $L^3(M)$ can ensure the error term is $O(\la^{n-1})$, while neither of them can imply the other one. Moreover, it is worth mentioning that this $L^3$ condition also naturally appears in the study of the ground state energy of the Hamilton
	operator in the Gross-Pitaevskii regime, see Boccato-Brennecke-Cenatiempo-Schlein \cite{bbcs}.
	
These two theorems are sharp, by the explicit examples studied in the recent work \cite{hz2021} of the authors. Specifically, the sharpness of Theorem \ref{thm} means the error term $o(\la^n)$ cannot be replaced by $O(\la^{n-\delta})$ for any $\delta>0$. The sharpness of Theorem \ref{thm2} means that the condition $L^n(M)$ cannot be replaced by $L^{p}(M)$ for any $p<n$. In \cite{hz2021}, we consider the singular potentials
	 \[V(x)=\rho(d_g(x,x_0))d_g(x,x_0)^{-2+\eta},\ 0<\eta<1,\]where $x_0\in M$ is fixed, $d_g$ is the Riemannian distance function on $(M,g)$, and $\rho$ is a smooth cutoff function nonvanishing at zero. This $V$ is clearly in $\mathcal{K}(M)$, and it belongs to $L^q(M)$, for all $ q<\frac{n}{2-\eta}$. The pointwise Weyl law for $H_V$ with these $V$ is expected to have a sharp error term $\approx \la^{n-\eta}$.  In \cite{hz2021}, we proved this sharp bound on the flat torus $M=\mathbb{T}^n$ for any dimensions $n$. See also Frank-Sabin \cite{fs} for another different proof for the sharpness in three dimensions. Recall that $L^{p}(M)\subset \mathcal{K}(M)\subset L^1(M)$ for all $p>\frac n2$, and that the Kato class can ensure the boundedness of eigenfunctions, while $L^{\frac n2}$ cannot (see \cite{BSS}, \cite{SimonSurvey}). 
	 From the discussion above, we can see that Kato class is exactly the border for the existence of the pointwise Weyl law, and $p=n$ is the threshold for the validity of the sharp pointwise Weyl law (with the error term $O(\la^{n-1})$) on the $L^p$ scale.  If $V\in L^p(M)$ ($\frac n2<p< n$), one can easily modify the argument in the proof of Theorem \ref{thm2} to obtain the sharp error term $O(\la^{n-2+\frac np})$.

	 The main strategy of the proof is using Fourier analysis and the wave equation techniques to estimate the difference between the classical kernel $\ola(P^0)(x,x)$ and the Schr\"odinger kernel $\ola(P_V)(x,x)$. We reduce it to estimating the difference between their smooth approximations, by the Fourier inversion formula, Duhamel's principle and Sogge's $L^p$-spectral projection bounds. To deal with the difference of two kernels, the main difficulty is to handle the ``mixed terms'' with two kinds of frequencies from $P^0$ and $P_V$. To get around this, we must design new efficient frequency decompositions, and estimate the terms carefully by Li-Yau's heat kernel bounds and the theory of pseudo-differential operators. 
	 
	 It is likely that the sharp pointwise Weyl laws for $H_V$ can be improved under some global geometric conditions on the manifolds (see e.g. \cite{dg}, \cite{berard}, \cite{hla} for $V=0$). Moreover, it is interesting to investigate the Weyl laws for $H_V$ on compact manifolds with boundary (see Seeley \cite{seeley1}, \cite{seeley2} for $V=0$). We are working in progress on these problems. See also \cite{hsz}, \cite{bhss}, \cite{hs2}, \cite{hs3} for recent related works.
	 
	 The paper is organized as follow. In Section 1, we prove Theorem \ref{thm} by assuming Lemma \ref{hdiff} and Lemma \ref{chi}. In Secction 2, we prove Lemma \ref{hdiff}, and Lemma \ref{chi} follows by repeating the same argument. In Section 3, we prove Theorem \ref{thm2} by assuming Lemma \ref{hdiff'}. In Section 4, we prove Lemma \ref{hdiff'}. In the Appendix, we prove two lemmas used in the proof of Lemma \ref{hdiff'}. Throughout the paper, $A\ls B$ (or $A\gs B$) means $A\le CB$ (or $A\ge CB$) for some implicit constant $C>0$ that may change from line to line. $A\approx B$ means $A\ls B$ and $A\gs B$. All implicit constants $C$ are independent of the parameters $\la$, $\eps$, $\la_j$, $\tau_k$.
	 
	 \noindent\textbf{Acknowledgement. } The authors would like to thank Christopher Sogge and Allan Greenleaf for their helpful suggestions. C.Z. is partially supported by AMS-Simons Travel Grant.

		
		\section{Proof of Theorem \ref{thm}}
		Let $0<\eps<1$ and $\la\ge\eps^{-1}$. Fix an even real-valued function $\rho\in C^\infty(\mathbb{R})$ satisfying 
		\[\rho(t)=1\ \text{on}\ [-\delta_0/2,\delta_0/2]\ \text{and}\ \text{supp}\,\rho\subset(-\delta_0,\delta_0),\]
		where we assume that $\delta_0<\text{Inj}M$ (the injectivity radius of $M$).  For $\tau>0$, let 
		\begin{equation}\label{defin}
		h(\tau)=\frac1\pi\int \rho(\eps\la t)\frac{\sin \la t}{t}\cos t\tau dt.
		\end{equation}
		Then for $\tau>0$ \[|h(\tau)-\ola(\tau)|\ls(1+(\eps\la)^{-1}|\tau-\la|)^{-N},\ \forall N,\]
		and \begin{equation}\label{hdao}
			|\partial_\tau^j h(\tau)|\ls (\eps\la)^{-j}(1+(\eps\la)^{-1}|\tau-\la|)^{-N},\ \forall N,\ j=1,2,....
		\end{equation}
		Let us fix a non-negative function $\chi\in \mathcal{S}(\mathbb{R})$ satisfying:
		\[\chi(\tau)\ge1,\ |\tau|\le1,\ \text{and}\ \hat\chi(t)=0,\ |t|\ge\frac12.\]
		Let
		\begin{equation}\label{defchi}\tilde\chi_\la(\tau)=\frac{\eps\la}{\pi}\int \hat\chi(\eps\la t)e^{it\la}\cos\tau t dt=\chi((\eps\la)^{-1}(\la-\tau))+\chi((\eps\la)^{-1}(\la+\tau)).\end{equation}
		We have for $\tau>0$
		\begin{equation}\label{cdao}|\partial_{\tau}^j\tilde\chi_\la(\tau)|\ls (\eps\la)^{-j}(1+(\eps\la)^{-1}|\tau-\la|)^{-N}, \forall N,\ j=0,1,2,....\end{equation} 
	The key lemmas for Theorem \ref{thm} are the following.
		\begin{lemma}\label{hdiff}There exists a $\Lambda(\eps,V)<\infty$ such that for any $\la>\Lambda(\eps,V)$, we have \[\sup_{x\in M}|(h(P_V)-h(P^0))(x,x)|\ls \eps\la^n.\]
		\end{lemma}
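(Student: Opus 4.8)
\noindent\emph{Proof plan.}
After the normalizations \eqref{1.7}--\eqref{1.8} set $W=H_V-H^0$; this is $V$ plus a constant, and since $W_n$ is locally integrable on an $n$-manifold $(n\ge2)$ every constant lies in $\mathcal K(M)$, so $W\in\mathcal K(M)$. Write $\kappa(\delta)=\sup_{x\in M}\int_{d_g(x,y)<\delta}|W(y)|\,W_n(d_g(x,y))\,dy$, a bounded quantity with $\kappa(\delta)\to0$ as $\delta\to0$. The plan is to prove the quantitative bound
\[\sup_{x\in M}\bigl|(h(P_V)-h(P^0))(x,x)\bigr|\ \ls\ \kappa\bigl(C(\eps\la)^{-1}\bigr)\,\la^n\ +\ C_V\,\la^{n-1},\]
which implies Lemma~\ref{hdiff} upon choosing $\Lambda(\eps,V)$ so large that $\kappa(C(\eps\la)^{-1})<\eps$ and $C_V\la^{-1}<\eps$ for $\la>\Lambda(\eps,V)$. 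For this I would first use Fourier inversion and \eqref{defin} to write $h(P)=\tfrac1\pi\int\rho(\eps\la t)\tfrac{\sin\la t}{t}\cos(tP)\,dt$ for $P\in\{P^0,P_V\}$, with $\rho(\eps\la\,\cdot\,)$ supported in $|t|<\delta_0/(\eps\la)$, $\delta_0<\mathrm{Inj}\,M$. Since $w(t)=(\cos tP_V-\cos tP^0)f$ has vanishing Cauchy data and solves $(\partial_t^2+H_V)w=-W\cos(tP^0)f$, Duhamel's formula gives $\cos tP_V-\cos tP^0=-\int_0^t\tfrac{\sin((t-s)P_V)}{P_V}\,W\,\cos(sP^0)\,ds$; carrying out the $s$- and $t$-integrations identifies $h(P_V)-h(P^0)$ with the double operator integral $\iint\tfrac{h(\tau)-h(\mu)}{\tau^2-\mu^2}\,dE_{P_V}(\tau)\,W\,dE_{P^0}(\mu)$, so on the diagonal
\[(h(P_V)-h(P^0))(x,x)=-\frac1\pi\int\rho(\eps\la t)\tfrac{\sin\la t}{t}\int_0^t\!\!\int_M\tfrac{\sin((t-s)P_V)}{P_V}(x,y)\,W(y)\,\cos(sP^0)(y,x)\,dy\,ds\,dt.\]
By finite propagation speed for $H_V$ and $H^0$ the $y$-integrand is supported where $d_g(x,y)\le\min(|s|,|t-s|)\ls(\eps\la)^{-1}$, so only the local Kato mass $\kappa(C(\eps\la)^{-1})$ of $W$ can enter.

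Next I would design the frequency decomposition. Since $h\approx\ola$, the double symbol $\tfrac{h(\tau)-h(\mu)}{\tau^2-\mu^2}$ is appreciable only in three regimes: (a) $\tau$ and $\mu$ both comparable to $\la$; (b) one of $\tau,\mu$ much smaller and the other much larger than $\la$; (c) one of $\tau,\mu$ within $O(\eps\la)$ of $\la$. I would insert a band-limited Littlewood--Paley partition $1=\beta_0(P^0)+\sum_{j\ge1}\beta_j(P^0)$ in the $P^0$-variable while keeping $h$ (and its $\tau$-derivatives) intact on the $P_V$-variable, so as not to destroy finite propagation speed. In regime (a) one uses the mean-value bound $\bigl|\tfrac{h(\tau)-h(\mu)}{\tau^2-\mu^2}\bigr|\le\tfrac{\sup|h'|}{\tau+\mu}\ls(\eps\la^2)^{-1}$ from \eqref{hdao}; in regime (b), say $\mu\gg\la\gg\tau$, the geometric expansion $\tfrac1{\tau^2-\mu^2}=-\sum_{l\ge0}\tau^{2l}\mu^{-2l-2}$ (convergent since $\tau/\mu\ll1$) reduces the contribution to a rapidly convergent sum of product-type operators $\bigl[h(P_V)P_V^{2l}\bigr]\,W\,\bigl[\beta_k(P^0)(P^0)^{-2l-2}\bigr]$ and the mirror image, whose sum over $k$ is closed by recognizing the high-frequency block as an $(H^0)^{-1}$-type operator whose off-diagonal kernel is governed by the Kato weight $W_n$; in regime (c) the rapid decay of $h,h'$ off the transition window in \eqref{hdao} supplies the gain.

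For each elementary piece $A\,W\,B$ I would bound the diagonal by Cauchy--Schwarz,
\[|(AWB)(x,x)|\le\Bigl(\int_M|A(x,y)|^2|W(y)|\,dy\Bigr)^{1/2}\Bigl(\int_M|B(y,x)|^2|W(y)|\,dy\Bigr)^{1/2},\]
and estimate the two weighted integrals separately. The $P^0$-factors $\beta_k(P^0)(P^0)^{-m}$ and $h(P^0)(P^0)^{m}$ are Fourier integral / pseudodifferential operators handled by the Hadamard parametrix: by finite propagation speed and band-limitedness their kernels are supported in a ball of radius $r\ls 2^{-k}$ (resp. $r\ls(\eps\la)^{-1}$) about $x$ of size $\ls 2^{k(n-m)}$ (resp. $\ls\la^{n+m}$), and since $W_n(d_g(x,y))\gtrsim r^{2-n}$ there, the weighted $L^2$-integral of the kernel is $\ls(\text{size})^2\,r^{n-2}\,\kappa(r)$. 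The $P_V$-factors are treated with the heat-kernel bounds for $e^{-tH_V}$, available precisely because $V\in\mathcal K(M)$: as in \cite{hs} they yield spectral cluster bounds $\|\1_{[\mu,\mu+1]}(P_V)\|_{L^1\to L^\infty}\ls\mu^{n-1}$, hence $\|h(P_V)P_V^{m}\|_{L^1\to L^\infty}\ls\la^{n+m}$ with kernel supported in $d_g(x,y)\ls(\eps\la)^{-1}$, and the same weighted estimate holds. Multiplying the symbol bounds of the previous paragraph by these kernel bounds and summing the geometric series in $j,k,l$ — closing the $\mu\gg\la$ tails by spending the extra $2^{-2k}$ decay against the largeness of $\la$, and using $\kappa$ at a small scale fixed in advance — produces the desired bound.

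I expect the main obstacle to be the two regimes lacking a frequency gap. In regime (a) the only saving is the tiny divided-difference factor $(\eps\la^2)^{-1}$, which must be balanced against the fact that the relevant clusters of $P_V$ and $P^0$ already carry $\approx\la^n$ on the diagonal, the remaining decay coming solely from the $W_n$-weighted smallness of $W$ on the $(\eps\la)^{-1}$-ball; and in the $\mu\gg\la$ part of regime (b) the symbol decays only polynomially, so one must resum the dyadic blocks into an $(H^0)^{-1}$-type operator in order to exploit the finiteness of $\int_M|W(y)|\,W_n(d_g(x,y))\,dy$. A lesser technical point is that $\tfrac{\sin((t-s)P_V)}{P_V}$ is a genuine distribution for $n\ge3$: before invoking the cluster bounds one integrates by parts in $s$ (or $t$) to trade its singularity on the light cone for the harmless extra smoothing by $P_V^{-1}$. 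Finally, Lemma~\ref{chi} follows from the identical scheme with $h$ replaced by $\tilde\chi_\la$: by \eqref{cdao} the symbol $\tfrac{\tilde\chi_\la(\tau)-\tilde\chi_\la(\mu)}{\tau^2-\mu^2}$ obeys the same estimates, and by \eqref{defchi} and finite propagation speed $\tilde\chi_\la(P^0)$ is again confined to scale $(\eps\la)^{-1}$.
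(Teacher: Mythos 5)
Your reduction via Duhamel's principle to the divided--difference representation $\sum_{j,k}\frac{h(\la_j)-h(\tau_k)}{\la_j^2-\tau_k^2}e_j^0(x)e_j^0(y)e_{\tau_k}(x)e_{\tau_k}(y)V(y)$ is exactly the paper's starting point, and your geometric--series factorization for well--separated frequencies plays the same role as the paper's Laplace--transform identity $\frac{-1}{\la_j^2-\tau_k^2}=\int_0^\infty e^{t(\la_j^2-\tau_k^2)}\,dt$ combined with the heat--kernel bounds of Lemma \ref{heatk}. But your argument does not close in regime (a), where $\la_j$ and $\tau_k$ are both comparable to $\la$ --- precisely the ``mixed terms'' difficulty the paper singles out. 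There the symbol $\frac{h(\tau)-h(\mu)}{\tau^2-\mu^2}$ is not a product of a function of $\tau$ with a function of $\mu$, so your Cauchy--Schwarz scheme for operators of the form $AWB$ does not apply; and the mean--value bound $\ls(\eps\la^2)^{-1}$ alone, combined with the cluster bounds $\sum_{\la_j\approx\la}|e_j^0(x)e_j^0(y)|\ls\la^n$ and $\sum_{\tau_k\approx\la}|e_{\tau_k}(x)e_{\tau_k}(y)|\ls\la^n$, only yields $\eps^{-1}\la^{2n-2}\|V\|_{L^1(M)}$, which exceeds $\eps\la^n$ for every $n\ge2$. What is missing is a mechanism producing off--diagonal decay of the mixed kernel: the paper obtains it by freezing $\tau_k$ and showing that $\la_j\mapsto\frac{h(\la_j)-h(\tau_k)}{\la_j^2-\tau_k^2}$ is an $S^{-2}$ symbol uniformly in $\tau_k$ (together with a rescaled version localized at scale $\tau_k^{-1}$), so that Lemma \ref{pdo} gives a $W_n(d_g(x,y))$--type kernel bound; the splitting $V=V_{\le N}+V_{>N}$ at a level $N(\eps,V)$ and scale $\delta(\eps,V)$ then exploits the Kato condition to close the estimate. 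Your proposal has no analogue of this step.

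Two further points. First, your claimed bound $\kappa(C(\eps\la)^{-1})\la^n$ rests on finite propagation speed, but the localization of the composite kernel to $d_g(x,y)\ls(\eps\la)^{-1}$ is destroyed the moment you insert the cutoffs $\beta_j(P^0)$: the frequency--localized pieces are no longer compactly supported in time, only rapidly decaying, so you cannot feed the Kato mass at scale $(\eps\la)^{-1}$ into each piece separately. The paper instead works at a fixed scale $\delta(\eps,V)$ and absorbs the far part of the kernel using the extra factors $(1+\tau_k d_g(x,y))^{-\sigma}$ or $(1+\la d_g(x,y))^{-N}$. Second, the unit--band bound $\|\1_{[\mu,\mu+1]}(P_V)\|_{L^1\to L^\infty}\ls\mu^{n-1}$ you invoke is not available for $V$ merely in $\mathcal K(M)$: heat--kernel bounds only give $\sum_{\tau_k\le\mu}|e_{\tau_k}(x)|^2\ls\mu^{n}$ (Corollary \ref{rough}), and the sharper band bound (Lemma \ref{bssbound}) requires $V\in\mathcal K(M)\cap L^{n/2}(M)$. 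For the present lemma the $\mu^n$ bound suffices, but your scheme should not rely on the stronger one.
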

		\begin{lemma}\label{chi}There exists a $\Lambda(\eps,V)<\infty$ such that for any $\la>\Lambda(\eps,V)$, we have
			\[\sup_{x\in M}|(\tilde\chi_\la(P_V)-\tilde\chi_\la(P^0))(x,x)|\ls \eps\la^n.\]
		\end{lemma}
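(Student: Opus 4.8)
The plan is to observe that $\tilde\chi_\la$ enters the proof of Theorem~\ref{thm} as the mollified spectral-cluster cutoff at frequency $\la$ of width $\eps\la$, and that it shares with $h$ precisely the two features on which the proof of Lemma~\ref{hdiff} relies; the argument of Section~2 then applies with only the time-weight changed.

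First I would isolate the two properties of $h$ that are actually used in Section~2: (i) the pointwise and derivative localization bounds \eqref{hdao}, and (ii) the representation, from \eqref{defin}, $h(P)=\frac1\pi\int\rho(\eps\la t)\frac{\sin\la t}{t}\cos(tP)\,dt$, in which the $t$-weight is supported in $\{|t|<\delta_0/(\eps\la)\}$, is $O(\la)$ pointwise, has $L^1_t$-norm $O(\log(1/\eps))$, and carries the oscillation $e^{\pm i\la t}$. For $\tilde\chi_\la$, property (i) is exactly \eqref{cdao} (which includes the case $j=0$, so in particular $|\tilde\chi_\la(\tau)|\ls(1+(\eps\la)^{-1}|\tau-\la|)^{-N}$), and property (ii) is \eqref{defchi}: $\tilde\chi_\la(P)=\frac{\eps\la}{\pi}\int\hat\chi(\eps\la t)e^{it\la}\cos(tP)\,dt$, whose $t$-weight $\frac{\eps\la}{\pi}\hat\chi(\eps\la t)e^{it\la}$ is supported in $\{|t|\le 1/(2\eps\la)\}$ (since $\hat\chi\equiv 0$ for $|t|\ge\frac12$), is $O(\eps\la)=O(\la)$ pointwise, has $L^1_t$-norm $\frac1\pi\|\hat\chi\|_{L^1}=O(1)$, and oscillates like $e^{i\la t}$. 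Writing $\frac{\sin\la t}{t}=\frac{e^{i\la t}-e^{-i\la t}}{2it}$ and symmetrizing $\hat\chi(\eps\la t)e^{it\la}$ in $t$ to the even weight $\hat\chi(\eps\la t)\cos\la t$, the two time-weights are of the same type, the new one being in fact slightly better (bounded $L^1_t$-norm in place of $\log(1/\eps)$).

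Then I would run the proof of Lemma~\ref{hdiff} line by line with $\rho(\eps\la t)\frac{\sin\la t}{t}$ replaced by $\frac{\eps\la}{\pi}\hat\chi(\eps\la t)e^{it\la}$: write
\[(\tilde\chi_\la(P_V)-\tilde\chi_\la(P^0))(x,x)=\frac{\eps\la}{\pi}\int\hat\chi(\eps\la t)e^{it\la}\bigl(\cos(tP_V)-\cos(tP^0)\bigr)(x,x)\,dt,\]
Duhamel-expand $\cos(tP_V)$ against $\cos(tP^0)$ and $V$, apply the same frequency decomposition, and bound the resulting diagonal and mixed terms by the same heat-kernel estimates (valid because $V\in\mathcal{K}(M)$) and pseudodifferential calculus. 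Since each of those bounds was extracted from the weight's support at scale $(\eps\la)^{-1}$, its $O(\la)$ size, its $L^1_t$ bound, and its $e^{\pm i\la t}$ oscillation — all of which persist here with constants no worse — every estimate in Section~2 goes through unchanged and yields $\sup_{x\in M}|(\tilde\chi_\la(P_V)-\tilde\chi_\la(P^0))(x,x)|\ls\eps\la^n$ for $\la>\Lambda(\eps,V)$.

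The main obstacle is purely organizational: one must confirm that no step in Section~2 exploits a finer feature of $h$ than those listed — in particular that nothing uses $h(\tau)\to\ola(\tau)$ or the precise sign or monotonicity of $h$, since $\tilde\chi_\la$ is a smooth bump about $\tau=\la$ rather than a mollified indicator of $[0,\la]$. Given how Section~2 is organized this should present no difficulty, which is why it suffices to say that Lemma~\ref{chi} follows by repeating the argument.
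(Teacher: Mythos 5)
Your proposal is correct and matches the paper's own (one-line) justification: the Section~2 argument uses only the decay/derivative estimates of the multiplier, and since $\tilde\chi_\la$ satisfies \eqref{cdao}, which is the exact analogue of \eqref{hdao} (indeed slightly stronger, as it includes $j=0$), the proof of Lemma~\ref{hdiff} carries over verbatim. The extra discussion of the time-side weight is harmless but not needed, since Section~2 works entirely on the spectral side after the Duhamel identity.
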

		We postpone the proof of these lemmas. The following two lemmas will be used several times in the proof.
		\begin{lemma}[Spectral projection bounds, \cite{sogge88}]\label{sogge888}For $\la\ge1$, we have\[\|\1_{[\la,\la+1)}(P^0)\|_{L^2\to L^p}\ls \la^{\sigma(n,p)},\ 2\le p\le \infty,\]
			where $\sigma(n,p)=\max\{\frac{n-1}2(\frac12-\frac1p),\frac{n-1}2-\frac{n}{p}\}$.
		\end{lemma}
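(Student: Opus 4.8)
\smallskip\noindent The plan is to prove this classical estimate of Sogge by the wave-equation parametrix method, in three steps.

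\noindent\textbf{Step 1 (reduction to a smooth cutoff).} First I would replace the sharp spectral window by a smooth one. Fix a real even $\chi\in\mathcal{S}(\R)$ with $\chi\ge1$ on $[-1,1]$ and $\hat\chi$ supported in $(-\delta_0,\delta_0)$, where $\delta_0<\text{Inj}M$. Then $\chi^2(\la-\tau)\ge\1_{[\la,\la+1)}(\tau)$ for every $\tau\ge0$, so $0\le\1_{[\la,\la+1)}(P^0)\le\chi^2(\la-P^0)$ as operators on $L^2(M)$. Using the elementary identity that for a bounded positive self-adjoint $A$ and conjugate exponents $p'\le2\le p$ one has $\|A\|_{L^{p'}\to L^p}=\sup_{\|f\|_{p'}\le1}\langle Af,f\rangle$ (so $p'\to p$ norms are monotone on positive operators), together with $TT^*$,
\[\|\1_{[\la,\la+1)}(P^0)\|_{L^2\to L^p}^2=\|\1_{[\la,\la+1)}(P^0)\|_{L^{p'}\to L^p}\le\|\chi^2(\la-P^0)\|_{L^{p'}\to L^p}=\|\chi(\la-P^0)\|_{L^2\to L^p}^2.\]
Hence it is enough to show $\|\chi(\la-P^0)\|_{L^2\to L^p}\ls\la^{\sigma(n,p)}$.

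\noindent\textbf{Step 2 (parametrix and kernel bound).} Next I would write $\chi(\la-P^0)=\frac1{2\pi}\int\hat\chi(t)\,e^{it\la}\,e^{-itP^0}\,dt$ and, since $\hat\chi$ is supported in $|t|<\delta_0<\text{Inj}M$, insert the Hadamard/Lax parametrix for the half-wave propagator $e^{-itP^0}$ valid for such $t$ and for $d_g(x,y)<\delta_0$ (the lower-order term $+1$ in $H^0$ affects only the amplitude, not the phase $|\xi|_g$). Performing the $t$-integration (the $t$-phase is stationary precisely when the dual frequency has size $\la$, and $\hat\chi$ confines $|\xi|$ to an $O(1)$ window about $\la$) yields, in geodesic normal coordinates at $y$,
\[\chi(\la-P^0)(x,y)=(2\pi)^{-n}\!\int_{\R^n}e^{i\langle x-y,\xi\rangle}\,a_\la(x,y,\xi)\,d\xi+O(\la^{-N}),\qquad\forall N,\]
with $a_\la$ of order $0$ and supported in $|\xi|\approx\la$; for $d_g(x,y)\ge\delta_0$ the kernel is $O(\la^{-N})$ by finite propagation speed. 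Polar coordinates in $\xi$ and stationary phase in the angular variable ($n-1$ nondegenerate directions, critical points $\pm(x-y)/|x-y|$) then give the classical bound
\[|\chi(\la-P^0)(x,y)|\ls\la^{n-1}\big(1+\la\,d_g(x,y)\big)^{-\frac{n-1}2},\qquad d_g(x,y)\ls1,\]
with the factor $e^{\pm i\la d_g(x,y)}$ displayed explicitly, so the kernel is of Carleson--Sj\"olin type.

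\noindent\textbf{Step 3 (extraction of the $L^2\to L^p$ bounds).} From the kernel information I would then deduce three estimates and interpolate. (i) $\|\chi(\la-P^0)\|_{L^2\to L^2}=\sup_\tau|\chi(\la-\tau)|=O(1)=\la^{\sigma(n,2)}$. (ii) $\|\chi(\la-P^0)\|_{L^2\to L^\infty}\ls\la^{(n-1)/2}=\la^{\sigma(n,\infty)}$, from the diagonal kernel size (equivalently, $TT^*$ together with $|\chi^2(\la-P^0)(x,x)|\ls\la^{n-1}$). (iii) The critical exponent $p_c=\frac{2(n+1)}{n-1}$: by $TT^*$ this reduces to $\|\chi^2(\la-P^0)\|_{L^{p_c'}\to L^{p_c}}\ls\la^{(n-1)/(n+1)}=\la^{2\sigma(n,p_c)}$, which is the Stein--Tomas restriction estimate in the present variable-coefficient setting; I would prove it by splitting the kernel dyadically into the regions $d_g(x,y)\approx2^j\la^{-1}$, $1\le2^j\ls\la$, estimating the $j$-th piece by an $L^1\to L^\infty$ bound from its pointwise size and an $L^2\to L^2$ bound from H\"ormander's oscillatory-integral-operator theorem (the phase $\la\,d_g(x,y)$ has mixed Hessian of rank $n-1$), interpolating on each piece to obtain an $L^{p_c'}\to L^{p_c}$ bound with geometric decay in $j$, and summing. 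Since $\sigma(n,p)$ is affine in $1/p$ separately on $[2,p_c]$ and on $[p_c,\infty]$ with the two segments meeting at $p_c$, Riesz--Thorin interpolation among (i), (iii), (ii) then yields $\|\chi(\la-P^0)\|_{L^2\to L^p}\ls\la^{\sigma(n,p)}$ for all $2\le p\le\infty$.

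\noindent The main obstacle is the endpoint $p=p_c$: the pointwise kernel bound alone does not suffice there (Young's inequality applied annulus-by-annulus loses a positive power of $\la$ at the coarsest scale $d_g(x,y)\approx1$), so one must exploit the oscillation of the wave kernel, i.e. the rank-$(n-1)$ nondegeneracy of $\la\,d_g(x,y)$, through the Stein--Tomas/Carleson--Sj\"olin mechanism. Everything else --- the interpolation, the harmless replacement of $-\Delta_g$ by $H^0=-\Delta_g+1$, and the localization to $|t|<\text{Inj}M$ --- is routine parametrix bookkeeping.
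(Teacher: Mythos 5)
The paper does not actually prove this lemma --- it is quoted verbatim from Sogge's 1988 theorem (see \cite{sogge88}, and the expositions in \cite{fio}, \cite{SoggeHangzhou}) --- and your outline is precisely the standard proof from those references: reduction to $\chi(\la-P^0)$ via positivity of the projector and $TT^*$, the half-wave parametrix kernel bound $\la^{n-1}(1+\la d_g(x,y))^{-(n-1)/2}$, and interpolation of the $L^2$, $L^\infty$, and Stein--Tomas/Carleson--Sj\"olin endpoints at $p_c=\tfrac{2(n+1)}{n-1}$. The only cosmetic imprecision is invoking ``finite propagation speed'' for $e^{-itP^0}$, which is not a local operator; the standard bookkeeping is to work with $\cos(tP^0)$ and discard $\chi(\la+P^0)=O(\la^{-N})$ using $P^0\ge 1$, but this does not affect the argument.
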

	These $L^p$-spectral projections bounds can be viewed as the generalized Tomas-Stein restriction estimates on manifolds. They were first obtained by Sogge \cite{sogge88}, and recently extended to the Schr\"odinger operators with critically singular potentials by Blair-Sire-Sogge \cite{BSS}. These bounds are sharp on $any$ closed manifolds. See \cite[Chapter 5]{fio}. 
		\begin{lemma}[Heat kernel bounds, \cite{Liyau}, \cite{sturm}]\label{heatk}
			If $V\in\mathcal{K}(M)$, then for $0<t\le 1$, there is a uniform constant $c=c_{M,V}>0$ so that
			\[e^{-tH_V}(x,y)\ls \begin{cases}t^{-n/2}e^{-cd_g(x,y)^2/t},\ \text{if}\  d_g(x,y)\le \text{Inj}(M)/2\\
				1,\ \ \text{otherwise}.
			\end{cases}\]
			Here Inj($M$) is the injectivity radius of $M$.
		\end{lemma}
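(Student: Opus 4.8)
The idea is to transfer the Gaussian upper bound for the free heat semigroup $e^{t\Delta_g}$ to the Schr\"odinger semigroup $e^{-tH_V}$, using the Kato hypothesis only to control the contribution of the potential. Since $-\Delta_g+V\ge-\Delta_g-V_-$ with $V_-:=\max\{-V,0\}\ge0$, and $e^{-t(-\Delta_g+U)}(x,y)$ is, for fixed $t>0$, pointwise nonincreasing in $U$ (monotonicity/domination of Schr\"odinger semigroups), it suffices to prove the bound with $H_V$ replaced by $-\Delta_g-W$, where $W:=V_-\in\mathcal{K}(M)$ and $W\ge0$. Recall also the classical Li--Yau bound for the free heat kernel on a compact manifold, $p_t(x,y):=e^{t\Delta_g}(x,y)\lesssim t^{-n/2}e^{-c_0 d_g(x,y)^2/t}$ for $0<t\le1$, valid because $\mathrm{Ric}_g$ is bounded below on $M$, together with the matching Gaussian lower bound.

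There are two parallel routes, and I would write out whichever turns out cleaner. \emph{Probabilistically}, the Feynman--Kac formula gives
\[
e^{-t(-\Delta_g-W)}(x,y)=p_t(x,y)\,\mathbb{E}^{t}_{x,y}\Big[\exp\Big(\int_0^t W(\omega_s)\,ds\Big)\Big],
\]
where $\mathbb{E}^{t}_{x,y}$ is expectation over the Brownian bridge from $x$ to $y$ of length $t$ on $(M,g)$; one then must show $\sup_{x,y\in M}\sup_{0<t\le1}\mathbb{E}^{t}_{x,y}\big[\exp(\int_0^tW(\omega_s)\,ds)\big]<\infty$. \emph{Analytically}, one expands the semigroup in the Duhamel--Dyson series $e^{-t(-\Delta_g-W)}=\sum_{k\ge0}u_k$ with $u_0=p_t$ and $u_k(t,x,y)=\int_0^t\!\int_M p_{t-s}(x,z)\,W(z)\,u_{k-1}(s,z,y)\,dz\,ds$, and must show this series converges and is majorized by a kernel of the claimed Gaussian form. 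The single input that drives either route is the characterization (Aizenman--Simon, Sturm) of the Kato class by the vanishing of the Schr\"odinger functional: $W\in\mathcal{K}(M)$ if and only if $\lim_{\tau\to0}\sup_x\int_0^\tau\!\int_M p_s(x,z)\,W(z)\,dz\,ds=0$, together with its two-point refinement in which $p_s(x,z)$ is replaced by $p_s(x,z)\,p_{\tau-s}(z,y)/p_\tau(x,y)$.

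Granting this, a single insertion of $W$ over a short interval $[0,\tau]$ contributes a factor $\eta(\tau)$ with $\eta(\tau)\to0$, uniformly in the endpoints; iterating the recursion and summing the geometric series in $k$ --- after first splitting $[0,1]$ into finitely many subintervals of length $\le\tau$ with $\eta(\tau)<1$ and using Chapman--Kolmogorov (equivalently, the semigroup property) --- yields $e^{-tH_V}(x,y)\lesssim t^{-n/2}e^{-c\,d_g(x,y)^2/t}$ for $0<t\le1$ with some $c=c_{M,V}>0$ (possibly much smaller than $c_0$). Finally, when $d_g(x,y)>\mathrm{Inj}(M)/2$ one merely observes that $t^{-n/2}e^{-c\,d_g(x,y)^2/t}\le t^{-n/2}e^{-c(\mathrm{Inj}(M)/2)^2/t}\lesssim1$ for all $0<t\le1$, which gives the second, ``far-away'' case.

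The step I expect to be the main obstacle is the quantitative two-point Kato estimate, i.e.\ keeping the Gaussian factor under control through the iteration. The ratio $p_s(x,z)\,p_{\tau-s}(z,y)/p_\tau(x,y)$ is \emph{not} pointwise bounded when $d_g(x,y)$ is large, because the Li--Yau upper and lower Gaussian constants do not agree; one must instead exploit that the set of $z$ on which this ratio is large is thin and integrates $W$ to something small by the Kato condition, and then track all constants uniformly in $x,y\in M$ and $0<t\le1$. Everything else is a routine perturbation/iteration argument.
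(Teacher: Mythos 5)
The paper does not prove this lemma at all: it is quoted as a black-box input with citations to Li--Yau and Sturm, so there is no in-paper argument to compare yours against. Your sketch follows exactly the standard route of those references (semigroup domination by $-\Delta_g-V_-$, the free Gaussian bound, and then either Feynman--Kac with Khasminskii's lemma or the equivalent Duhamel--Dyson iteration driven by the heat-semigroup characterization of the Kato class), and the reduction of the ``far-away'' case to $t^{-n/2}e^{-c(\mathrm{Inj}(M)/2)^2/t}\lesssim 1$ is fine. So the strategy is the right one.

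That said, as written your proposal is an outline with its hardest step explicitly deferred rather than carried out, and that step is genuinely where the content lies. You correctly observe that the bridge density ratio $p_s(x,z)p_{\tau-s}(z,y)/p_\tau(x,y)$ is not pointwise controlled because the Gaussian constants in the Li--Yau upper and lower bounds do not match; but you then say one ``must instead exploit that the set of $z$ on which this ratio is large is thin'' without indicating how the Kato condition (which controls integrals of $|V|W_n(d_g(x,\cdot))$ against a fixed radial weight) actually dominates integrals against these $s$-dependent bridge weights uniformly in $s,\tau,x,y$. The standard resolution is either (i) Sturm's intrinsic formulation, where one proves directly that $\lim_{\tau\to0}\sup_{x}\int_0^\tau\!\int_M p_s(x,z)|V(z)|\,dz\,ds=0$ is equivalent to your stated Kato condition and then runs Khasminskii for the \emph{unconditioned} process, recovering the Gaussian factor afterwards by a separate argument (e.g.\ Davies' exponential perturbation method or a weighted iteration with a fixed target constant $c<c_0$ chosen in advance so that the Gaussian majorant does not degenerate along the series); or (ii) the elementary convolution inequality for Gaussians with mismatched constants that makes the inductive majorant $A_k\,t^{-n/2}e^{-cd_g(x,y)^2/t}$ close under one Duhamel insertion with $A_k\le \eta(\tau)A_{k-1}$. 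Without one of these, the series in $k$ is not actually summed and the claimed uniform constant $c_{M,V}$ is not produced. Since the lemma is anyway a cited classical fact, the cleanest fix is simply to invoke Sturm's theorem; if you want a self-contained proof you need to supply the weighted Khasminskii/iteration lemma you are gesturing at.
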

The heat kernels bounds were first obtained by Li-Yau \cite{Liyau} for smooth potentials, and extended to the Kato class by Sturm \cite{sturm}.	Note that
	\[\sum_{\tau_k\le \la}|e_{\tau_k}(x)|^2\ls \sum_{\tau_\ell}e^{-\la^{-2}\tau_\ell^2}|e_{\tau_\ell}(x)|^2=e^{-\la^{-2}H_V}(x,x),\]
	so we have the following eigenfunction bounds.
		\begin{corr}[Rough eigenfunction bounds]\label{rough} If $V\in\mathcal{K}(M)$, then for $\la\ge1$
			\[\sup_{x \in M}\sum_{\tau_k\le \la}|e_{\tau_k}(x)|^2\le C_V\la^{n}.\]
		\end{corr}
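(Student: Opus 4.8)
The plan is to deduce this directly from the on-diagonal heat kernel bound of Lemma \ref{heatk} by the standard exponential comparison trick, so no new ideas are needed. First I would record the spectral expansion of the heat semigroup. Since $H_V$ is self-adjoint, bounded below by one, has discrete spectrum $\{\tau_k^2\}$, and (as recalled in the introduction) continuous $L^2$-normalized eigenfunctions $e_{\tau_k}$, and since $M$ is compact so that $e^{-tH_V}$ is trace class for $t>0$, the operator $e^{-tH_V}$ has a continuous integral kernel given by the absolutely convergent series $e^{-tH_V}(x,y)=\sum_k e^{-t\tau_k^2}e_{\tau_k}(x)\overline{e_{\tau_k}(y)}$. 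Restricting to the diagonal yields
\[
e^{-tH_V}(x,x)=\sum_{k}e^{-t\tau_k^2}\,|e_{\tau_k}(x)|^2,
\]
a convergent sum of nonnegative terms, for every $x\in M$ and $t>0$.

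Next I would apply Lemma \ref{heatk} with $y=x$. Since $d_g(x,x)=0\le \mathrm{Inj}(M)/2$, the lemma gives, uniformly in $x\in M$ and for all $0<t\le 1$,
\[
\sum_{k}e^{-t\tau_k^2}\,|e_{\tau_k}(x)|^2 \;=\; e^{-tH_V}(x,x)\;\le\; C\,t^{-n/2},
\]
where $C=C_{M,V}$ is the constant in Lemma \ref{heatk}.

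Finally, fix $\la\ge 1$ and choose $t=\la^{-2}\in(0,1]$. For every index $k$ with $\tau_k\le\la$ we have $t\tau_k^2=\tau_k^2/\la^2\le 1$, hence $e^{-t\tau_k^2}\ge e^{-1}$, i.e. $1\le e\,e^{-t\tau_k^2}$. Keeping only the terms with $\tau_k\le\la$ from the full nonnegative sum therefore gives
\[
\sum_{\tau_k\le\la}|e_{\tau_k}(x)|^2 \;\le\; e\sum_{\tau_k\le\la}e^{-t\tau_k^2}|e_{\tau_k}(x)|^2 \;\le\; e\sum_{k}e^{-t\tau_k^2}|e_{\tau_k}(x)|^2 \;\le\; eC\,t^{-n/2} \;=\; eC\,\la^{n}.
\]
Taking the supremum over $x\in M$ yields the claim with $C_V=e\,C_{M,V}$. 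I do not expect any genuine obstacle here; the only point worth a sentence of justification is the legitimacy of the termwise spectral expansion of the heat kernel on the diagonal, which is standard once one invokes the compactness of $M$ (making $e^{-tH_V}$ trace class) together with the continuity of the eigenfunctions noted in the introduction.
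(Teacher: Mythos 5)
Your proof is correct and is essentially the paper's argument: the paper deduces the corollary from the $L^2\to L^\infty$ bound of $e^{-tH_V}$ with $t=\la^{-2}$, which is exactly the on-diagonal heat kernel bound $\sum_k e^{-t\tau_k^2}|e_{\tau_k}(x)|^2\ls t^{-n/2}$ combined with $e^{-t\tau_k^2}\ge e^{-1}$ for $\tau_k\le\la$. No substantive difference.
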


		Using the classical pointwise Weyl Law for $P^0$ (see e.g. \cite{SoggeHangzhou}), we have
		
		\[h(P^0)(x,x)=\frac{\omega_n}{(2\pi)^{n}}\lambda^{n}+O(\eps\lambda^{n}).\]
		Then by Lemma \ref{hdiff}, we get
		\[h(P_V)(x,x)=\frac{\omega_n}{(2\pi)^{n}}\lambda^{n}+O(\eps\lambda^{n}).\]
		We claim that there exists a $\Lambda(\eps,V)<\infty$ such that for any $\la>\Lambda(\eps,V)$ we have\[\1_{[\la,\la+\eps\la)}(P_V)(x,x)=O(\eps\la^n).\]
		Indeed, it follows from
		\[\chi((\eps\la)^{-1}(\la-P_V))(x,x)=O(\eps\la^n).\]
	By \eqref{defchi}, it suffices to show
		\begin{equation}\label{tchi}\tilde\chi_\la(P_V)(x,x)=O(\eps\la^n),\end{equation}
		as  Corollary \ref{rough} implies \[\chi((\eps\la)^{-1}(\la+P_V))(x,x)=O(\eps^N\la^n),\ \forall N.\] By the classical $L^\infty$ spectral projection bounds (Lemma \ref{sogge888}), we know $\tilde\chi_\la(P^0)(x,x)=O(\eps\la^n)$, so we obtain \eqref{tchi}  by Lemma \ref{chi}. This proves the claim.
		
		As a corollary of this claim, we have for $j\ge0$ and $(1+\eps)^{-j-1}\la>\Lambda(\eps,V)$,
		\[\sum_{\tau_k\in[(1+\eps)^{-j-1}\la,(1+\eps)^{-j}\la)}|e_{\tau_k}(x)|^2=O(\eps((1+\eps)^{-j}\la)^n),\]
		and for $j\ge 0$ and $\la>\Lambda(\eps,V)$
		\[\sum_{\tau_k\in[(1+\eps)^{j}\la,(1+\eps)^{j+1}\la)}|e_{\tau_k}(x)|^2=O(\eps((1+\eps)^j\la)^n).\] Moreover, by Corollary \ref{rough},
		\[\sum_{\tau_k\in[1,\Lambda(\eps,V)]}|e_{\tau_k}(x)|^2\ls \Lambda(\eps,V)^n. \]Hence 	if $\la>\Lambda_1(\eps,V)$ with $\eps\Lambda_1^n>\Lambda^n$, then
		\begin{align*}\Big|(\ola(P_V)-&h(P_V))(x,x)\Big| \\
		&\le \sum_{k}|\ola(\tau_k)-h(\tau_k)||e_{\tau_k}(x)|^2\\
		&\ls\sum_{k}(1+(\eps\la)^{-1}|\tau_k-\la|)^{-N}|e_{\tau_k}(x)|^2\\
			&\ls \sum_{\tau_k\in[1,\Lambda(\eps,V)]}|e_{\tau_k}(x)|^2\\
			&+\sum_{(1+\eps)^{-j-1}\la>\Lambda(\eps,V)} \sum_{\tau_k\in[(1+\eps)^{-j-1}\la,(1+\eps)^{-j}\la)}(1+(\eps\la)^{-1}|(1+\eps)^{-j}\la-\la|)^{-N}|e_{\tau_k}(x)|^2\\
			&+ \sum_{j=0}^\infty (1+(\eps\la)^{-1}|(1+\eps)^j\la-\la|)^{-N}\sum_{\tau_k\in[(1+\eps)^{j}\la,(1+\eps)^{j+1}\la)}|e_{\tau_k}(x)|^2\\
			&\ls \Lambda(\eps,V)^n+\sum_{j=0}^\infty (1+(\eps\la)^{-1}|(1+\eps)^{-j}\la-\la|)^{-N}\eps((1+\eps)^{-j}\la)^n\\
			&+ \sum_{j=0}^\infty (1+(\eps\la)^{-1}|(1+\eps)^j\la-\la|)^{-N}\eps((1+\eps)^j\la)^n\\
			&\ls \eps\lambda^{n}.\end{align*}
		So for $\la>\Lambda_1(\eps,V)$, \[\ola(P_V)(x,x)=\frac{\omega_n}{(2\pi)^n}\la^n+O(\eps\la^n).\]
		To complete the proof, we only need to prove Lemma \ref{hdiff}, and Lemma \ref{chi}. For simplicity, we shall only give the proof of Lemma \ref{hdiff} here, since $\tilde\chi_\la(\tau)$  satisfies \eqref{cdao} that is analogous to the estimate \eqref{hdao} of $h(\tau)$, Lemma \ref{chi} follows by repeating the same argument.
		
		\section{Proof of Lemma \ref{hdiff}}
		First, we follow the reduction argument in \cite{hs}. Let \[\cos tP^0(x,y)=\sum_{j}\cos t\la_j e_j^0(x)e_j^0(y).\]
		It is the kernel of the solution operator for $f\to (\cos tP^0)f=u^0(t,x)$, where $u^0(t,x)$ solves the wave equation
		\[\left(\partial_{t}^{2}+H^{0}\right) u^{0}(x, t)=0,(x, t) \in M \times \mathbb{R},\left.u^{0}\right|_{t=0}=f,\left.\partial_{t} u^{0}\right|_{t=0}=0.\] Similarly, 
		\[\bigl(\cos(tP_V)\bigr)(x,y)=\sum_k \cos t\tau_k \, e_{\tau_k}(x)e_{\tau_k}(y)\]is the kernel of $f\to \cos (tP_V)f=u_V(x,t)$, where 
		$u_V$ solves the wave equation
\[(\partial_t^2+H_V)u_V(x,t)=0, \, \, (x,t)\in M\times \R,
			\, \, u_V|_{t=0}=f, \, \, \partial_tu_V|_{t=0}=0.\] By Duhamel's principle,
		\begin{align*}\cos tP_V(x,y)&-\cos tP^0(x,y)\\
	&=-\sum_{\la_j}\sum_{\tau_k}\int_M\int_0^t\frac{\sin(t-s)\la_j}{\la_j}\cos s\tau_k\  e_j^0(x)e_j^0(z)e_{\tau_k}(z)e_{\tau_k}(y)V(z)dzds\\
			&=\sum_{\la_j}\sum_{\tau_k}\int_M\frac{\cos t\la_j-\cos t\tau_k}{\la_j^2-\tau_k^2} e_j^0(x)e_j^0(z)e_{\tau_k}(z)e_{\tau_k}(y)V(z)dz.\end{align*}
		Recall that
		\[h(\tau)=\frac1\pi\int \rho(\eps\la t)\frac{\sin \la t}{t}\cos t\tau dt.\]
		We have
		\[h(P_V)(x,y)-h(P^0)(x,y)=\sum_{\la_j}\sum_{\tau_k}\int_M\frac{h(\la_j)-h(\tau_k)}{\la_j^2-\tau_k^2} e_j^0(x)e_j^0(z)e_{\tau_k}(z)e_{\tau_k}(y)V(z)dz\]
		So we just need to prove there exists a $\Lambda(\eps,V)>0$ such that for any $\la>\Lambda(\eps,V)$ we have
		\begin{equation}\label{main}
		\Bigl| \, \sum_{\lambda_j}
		\sum_{\tau_k}\int_M
		\frac{h(\la_j)-h(\tau_k)}{\la_j^2-\tau_k^2} e_j^0(x)e_j^0(y)e_{\tau_k}(x)
		e_{\tau_k}(y)V(y) \, dy\, \Bigr|\ls\eps\la^n.
		\end{equation}
		Decompose the sum into two parts with respect to the frequencies $\tau_k$: \[\sum_{\lambda_j}
		\sum_{\tau_k}=\sum_{\tau_k\le10\la}\sum_{\la_j}+\sum_{\tau_k>10\la}\sum_{\la_j}\]
		We will use the following lemma several times.
	\begin{lemma}[Kernel estimates of PDO]\label{pdo}
		Let $\mu\in \mathbb{R}$, and $m\in C^\infty(\mathbb{R})$ belong to the symbol class $S^\mu$, that is, assume that
		\[\Big|\Big(\frac{d}{d t}\Big)^{\alpha} m(t)\Big| \leq C_{\alpha}(1+|t|)^{\mu-\alpha} \quad \forall \alpha.\]
		Then $m(P^0)$ is a pseudo-differential operator of order $\mu$. Moreover, if $R\ge1$, then the kernel of the operator $m(P^0/R)$ satisfies for all $N\in\mathbb{N}$
		\begin{equation}\label{withla}
			|m(P^0/R)(x,y)|\\\le  \begin{cases}CR^n\big(R d_g(x,y)\big)^{-n-\mu}\big(1+R d_g(x,y)\big )^{-N},\,\quad\quad\,\,\,\, \ n+\mu>0\\
				CR^n\log(2+(Rd_g(x,y))^{-1})\big(1+R d_g(x,y)\big)^{-N},\,\, \ n+\mu=0\\
				CR^n(1+Rd_g(x,y))^{-N},\ \ \ \ \ \quad\quad\quad\quad\quad\quad\quad\quad\ \  n+\mu<0.
			\end{cases}
		\end{equation}
	\end{lemma}

See \cite[Theorem 4.3.1]{fio}, \cite[Prop.1 on page 241]{steinbook} for the proof.  In the lemma, we mean that the inequalities hold near the diagonal (so that $d_g(x,y)$ is well-defined) and that outside the neighborhood of the diagonal we have $|m(P^0/R)(x,y)|\ls R^{-N}$ for all $N$.

 We first deal with the low-frequency part $\tau_k\le 10\la$.
 
\noindent \textbf{1. Low-frequency $(\tau_k\le 10\la)$.}
		
		Let \[M_0: \la_j\mapsto\frac{h(\la_j)-h(\tau_k)}{\la_j^2-\tau_k^2}\] and $P_\ell(\tau):=h(\tau_k)-\sum_{j=0}^\ell\frac1{j!}h^{(j)}(\tau)(\tau_k-\tau)^j$ for $\ell=0,1,2,...$. Then by induction we have
		\begin{equation}\label{dao}
			\partial_\tau^\ell \Big(\frac{h(\tau)-h(\tau_k)}{\tau-\tau_k}\Big)=\frac{\ell!}{(\tau_k-\tau)^{\ell+1}}P_\ell(\tau),\ \ell=0,1,2,....
		\end{equation}
		Since \begin{equation}\label{pell}
			P_\ell(\tau)=\frac1{(\ell+1)!}h^{(\ell+1)}(\tau+\theta(\tau_k-\tau))(\tau_k-\tau)^{\ell+1}
		\end{equation} for some $0\le \theta\le 1$, by \eqref{hdao} we have for $|\tau-\tau_k|\le \frac12\tau$,
		\begin{equation}\label{eq1}
			|\partial_\tau^\ell \Big(\frac{h(\tau)-h(\tau_k)}{\tau-\tau_k}\Big)|\approx |h^{(\ell+1)}(\tau+\theta(\tau_k-\tau))|\ls \eps^{-1-\ell}(1+\tau)^{-1-\ell}.
		\end{equation}
		So when $|\la_j-\tau_k|\le \frac12\la_j$, $\la_j+\theta(\tau_k-\la_j)\approx \la_j$, by \eqref{hdao}, we have for $\ell=0,1,2,....$,
		\begin{equation}\label{eq2}
			|\partial_{\la_j}^\ell M_0(\la_j)|\ls\begin{cases} \eps^{-1-\ell}(1+\la_j)^{-2-\ell}, \,\,\,\text{if}\,\,\, \la_j \ge 2\la \\ \eps^{-1-\ell}(1+\la)^{-2-\ell}, \,\,\,\text{if}\,\,\, \la_j \le 2\la
			\end{cases}
		\end{equation}
		When $|\la_j-\tau_k|> \frac12\la_j$, it follows directly from \eqref{hdao} that
		\begin{equation}\label{eq3}
			|\partial_{\la_j}^\ell M_0(\la_j)|\ls \begin{cases} \eps^{-1-\ell}(1+\la_j)^{-2-\ell}, \,\,\,\text{if}\,\,\, \la_j > 2\tau_k \\ \eps^{-1-\ell}(1+\tau_k)^{-2-\ell}, \,\,\,\text{if}\,\,\, \la_j < \frac23\tau_k
			\end{cases}
		\end{equation}
		The implicit constants in \eqref{eq1}, \eqref{eq2}, \eqref{eq3} are independent of $\tau_k$. Now we have shown $M_0\in S^{-2}$, and satisfies
		\begin{equation}\label{eq4}
			|\partial_{\la_j}^\ell M_0(\la_j)|\ls \eps^{-1-\ell}(1+\la_j)^{-2-\ell},\ \ell=0,1,2,....
		\end{equation}
		Then by Lemma \ref{pdo}, we have for some constant $n_0>0$ (only dependent on $n$),
		\begin{equation}\label{eq5}
		|\sum_{\la_j}\frac{h(\la_j)-h(\tau_k)}{\la_j^2-\tau_k^2} e_j^0(x)e_j^0(y)|\ls \eps^{-n_0}W_n(d_g(x,y)).
		\end{equation}
		Additionally, by \eqref{eq2} and \eqref{eq3}, it is straightforward to check that for $\tau_k\le 10\la$, if we let $\tilde M_0(\mu)=\tau_k^2M_0(\tau_k\mu)$, then
			\begin{equation}\label{eq6}
			|\partial_{\mu}^\ell \tilde M_0(\mu)|\ls \eps^{-1-\ell}(1+\mu)^{-2-\ell},\ \ell=0,1,2,....
		\end{equation}
		Thus, using \eqref{withla}, since $M_0(\mu)=\tau_k^{-2}\tilde M_0(\mu/\tau_k)$, we also have
		\begin{equation}\label{eq7}
		|\sum_{\la_j}\frac{h(\la_j)-h(\tau_k)}{\la_j^2-\tau_k^2} e_j^0(x)e_j^0(y)|\ls \eps^{-n_0}\tau_k^{n-2}W_n(\tau_k d_g(x,y))\big(1+\tau_k d_g(x,y)\big)^{-\sigma},\ \forall \sigma.
		\end{equation}
		We may write $V=V_{\le N}+ V_{>N}$, where \[V_{\le N}(x)=\begin{cases}V(x),\ \text{if}\ |V(x)|\le N,\\
			0,\ \text{otherwise.}
			\end{cases}\]One the one hand, by the fact that $V\in\mathcal{K}(M)\subset L^1(M)$ we can choose $N(\eps,V)<\infty$ and $\delta(\eps,V)>0$  such that
		\[\int_{d_g(x,y)\le\delta}|V_{>N}(y)|W_n(d_g(x,y))dy<\eps^{n_0+1},\]
		\[\int_{d_g(x,y)>\delta} |V_{>N}(y)|W_n(d_g(x,y))dy<\eps^{n_0+1}.\]
		Recall that Corollary \ref{rough}  gives $\sum_{\tau_k\le 10\la}|e_{\tau_k}(x)e_{\tau_k}(y)|\ls \la^n$. So by using \eqref{eq5} we get 
		\begin{equation}
		|\sum_{\tau_k\le 10\la}\sum_{\la_j} \int_M
		\frac{h(\la_j)-h(\tau_k)}{\la_j^2-\tau_k^2} e_j^0(x)e_j^0(y)e_{\tau_k}(x)
		e_{\tau_k}(y)V_{>N}(y)dy|\ls \eps\la^n.
		\end{equation}
		On the other hand, since Corollary \ref{rough}  also gives  $\sum_{\tau_k\approx 2^m}|e_{\tau_k}(x)e_{\tau_k}(y)|\ls 2^{mn}$, by using \eqref{eq7} we have
		\begin{multline}\label{eq8}
		|\sum_{\tau_k\le 10\la}\sum_{\la_j} \int_M
		\frac{h(\la_j)-h(\tau_k)}{\la_j^2-\tau_k^2} e_j^0(x)e_j^0(y)e_{\tau_k}(x)
		e_{\tau_k}(y)V_{\le N}(y)dy| \\ \ls \sum_{m\in \mathbb{N}: 2^m\le10\la} N \eps^{-n_0}2^{mn}\cdot2^{-2m} \ls N \eps^{-n_0}\la^{n-2}\log\la.
		\end{multline}
		Thus the proof of \eqref{main} is complete for the low-frequency part $\tau_k\le10\la$ if we choose $\Lambda(\e, V)$ such that $N \eps^{-n_0}\Lambda^{n-2}\log\Lambda\le\e \Lambda^{n}$.
		
		Next, we only need to deal with high-frequency part $\tau_k>10\la$.
		
\noindent \textbf{2. High-frequency ($\tau_k>10\la)$.}

		Choose smooth cut-off functions such that
		\[\beta(\la_j\ls \la)+\sum_{2^m\ge5\la}\beta(\la_j\approx 2^m)=1,\ \la_j\ge1,\]
		\[\beta(\tau_k\ls 2^m)+\beta(\tau_k\gs 2^m)=1,\ \text{if  }\ 2^m\ge5\la.\]
		Here $\beta(\la_j\ls \la)$ is supported on $\{\la_j< 5\la\}$, and $\beta(\la_j\approx 2^m)$ is supported on $\{2^{m-2}<\la_j<2^m\}$, and $\beta(\tau_k\gs 2^m)$ is supported on $\{\tau_k>2^{m+1}\}$. Thus, if $2^m\ge5\la$ and $\tau_k>10\la$, then $\tau_k> 2\la_j$ on the support of $\beta(\la_j\approx 2^m)\beta(\tau_k\gs 2^m)$. If  $\tau_k>10\la$, then $\tau_k>2\la_j$ on the support of $\beta(\la_j\ls \la)$.

		For each fixed $\tau_k>10\la$, let $$M(\tau)=\frac{h(\tau)-h(\tau_k)}{\tau^2-\tau_k^2},$$ where $h(\tau)$ is defined as in \eqref{defin}. We shall decompose the sum into three parts, 
		
		\begin{equation}\label{eq1'}\int_M\sum_{2^m\ge5\la}\sum_{\la_j}\sum_{\tau_k>10\la}M(\la_j)\beta(\la_j\approx 2^m)\beta(\tau_k\ls 2^m)e_j^0(x)e_j^0(y)e_{\tau_k}(x)e_{\tau_k}(y)V(y)dy,\end{equation}
		\begin{equation}\label{eq2'}\int_M\sum_{2^m\ge5\la}\sum_{\la_j}\sum_{\tau_k>10\la}M(\la_j)\beta(\la_j\approx 2^m)\beta(\tau_k\gs 2^m)e_j^0(x)e_j^0(y)e_{\tau_k}(x)e_{\tau_k}(y)V(y)dy,\end{equation}
		\begin{equation}\label{eq3'}
			\int_M\sum_{\la_j}\sum_{\tau_k>10\la}M(\la_j)\beta(\la_j\ls \la)e_j^0(x)e_j^0(y)e_{\tau_k}(x)e_{\tau_k}(y)V(y)dy.
		\end{equation}
		In the following three subsections, we show that they are all $O(\eps\la^n)$ for large $\la$.
		
	\noindent	\textbf{1. Estimate of \eqref{eq1'}.}
		First, by \eqref{hdao} and mean value theorem, it is not hard to see that \[M_1:\ \la_j\mapsto \sum_{2^m\ge5\la}\frac{h(\la_j)-h(\tau_k)}{\la_j^2-\tau_k^2}\beta(\la_j\approx 2^m)\beta(\tau_k\ls 2^m)\] is a symbol in $S^{-2}$, and satisfies 
		\[|\partial_{\la_j}^\ell M_1(\la_j)|\ls (1+\la_j)^{-2-\ell}(1+(\eps\la)^{-1}\tau_k)^{-N},\ \forall  N,\ \ell=0,1,2,.... ,\]
		where we used the fact that $|\tau_k-\la|\approx \tau_k$ if $\tau_k>10\la$.
	Then by Lemma \ref{pdo}, the kernel satisfies
		\[|M_1(P^0)(x,y)|\ls W_n(d_g(x,y))(1+(\eps\la)^{-1}\tau_k)^{-N}, \ \ \forall  N\]
		and  Corollary \ref{rough} gives
	\[\sum_{\tau_k>10\la}(1+(\eps\la)^{-1}\tau_k)^{-N}|e_{\tau_k}(x)e_{\tau_k}(y)|\ls \sum_{\ell\in\mathbb{N}:2^\ell>10\la}(1+(\eps\la)^{-1}2^\ell)^{-N}2^{n\ell}\ls\eps^N\la^n\le\eps\la^n.\]
		Thus using $V\in \mathcal{K}(M)$ we get
		\[|\int_M\sum_{2^m\ge5\la}\sum_{\la_j}\sum_{\tau_k>10\la}M_1(\la_j)e_j^0(x)e_j^0(y)e_{\tau_k}(x)e_{\tau_k}(y)V(y)dy|\ls \eps\la^n.\]
		
		\noindent \textbf{2. Estimate of \eqref{eq2'}.}
		Second, by our construction of the cut-off functions, we have  $\tau_k> 2\la_j$ on the support of $\beta(\la_j\approx 2^m)\beta(\tau_k\gs 2^m)$ if $2^m\ge5\la$ and $\tau_k>10\la$. Then
		\[M_{21}: \la_j\mapsto \sum_{2^m\ge5\la}\frac{h(\tau_k)}{\la_j^2-\tau_k^2}\beta(\la_j\approx 2^m)\beta(\tau_k\gs 2^m)\]
		is a symbol in $S^{-2}$ and satisfies
		\[|\partial_{\la_j}^\ell M_{21}(\la_j)|\ls (1+\la_j)^{-2-\ell}(1+(\eps\la)^{-1}\tau_k)^{-N},\ \forall  N,\ \ell=0,1,2,....\]
		Hence by Lemma \ref{pdo}, the kernel satisfies
		\[|M_{21}(P^0)(x,y)|\ls W_n(d_g(x,y))(1+(\eps\la)^{-1}\tau_k)^{-N},\]
		and again  Corollary \ref{rough} gives
		\[\sum_{\tau_k>10\la}(1+(\eps\la)^{-1}\tau_k)^{-N}|e_{\tau_k}(x)e_{\tau_k}(y)|\ls \eps^N\la^n\le\eps\la^n.\]
		Thus using $V\in \mathcal{K}(M)$ we get
		\[|\int_M\sum_{2^m\ge5\la}\sum_{\la_j}\sum_{\tau_k>10\la}M_{21}(\la_j)e_j^0(x)e_j^0(y)e_{\tau_k}(x)e_{\tau_k}(y)V(y)dy|\ls \eps\la^n.\]
		Moreover, if we write
		\[\frac{-h(\la_j)}{\la_j^2-\tau_k^2}=\int_0^\infty h(\la_j)e^{t(\la_j^2-\tau_k^2)}dt=\int_0^{2^{-2m}} h(\la_j)e^{t(\la_j^2-\tau_k^2)}dt-\frac{h(\la_j)e^{2^{-2m}(\la_j^2-\tau_k^2)}}{\la_j^2-\tau_k^2},\]
		then
		\[M_{22}:\la_j\mapsto  \frac{h(\la_j)e^{2^{-2m}\la_j^2}}{\la_j^2-\tau_k^2}\beta(\la_j\approx 2^m)\beta(\tau_k\gs 2^m)\]
		is a symbol in $S^{-2}$ and satisfies
		\[|\partial_{\la_j}^\ell M_{22}(\la_j)|\ls (1+\la_j)^{-2-\ell}(1+(\eps\la)^{-1}2^m)^{-N},\ \forall  N,\ \ell=0,1,2,....\]
		Hence by Lemma \ref{pdo}, the kernel satisfies
		\[|M_{22}(P^0)(x,y)|\ls W_n(d_g(x,y))(1+(\eps\la)^{-1}2^m)^{-N}.\]
		By Corollary \ref{rough},
		\[\sum_{\tau_k\gs 2^m}e^{-2^{-2m}\tau_k^2}|e_{\tau_k}(x)e_{\tau_k}(y)|\ls \sum_{\ell\in\mathbb{N}:2^\ell\gs2^m}e^{-2^{-2m}2^{2\ell}}2^{n\ell}\ls 2^{nm}.\]
		Thus using $V\in \mathcal{K}(M)$ we get
		\begin{align*}
			|\int_M\sum_{2^m\ge5\la}\sum_{\la_j}\sum_{\tau_k>10\la}&M_{22}(\la_j)e_j^0(x)e_j^0(y)e_{\tau_k}(x)e_{\tau_k}(y)V(y)|\\
			&\ls \sum_{2^m\ge5\la}2^{nm}(1+(\eps\la)^{-1}2^m)^{-N}\\
			&\ls \eps^N\la^n\le\eps\la^n.
		\end{align*}
		Furthermore, by heat kernel bounds Lemma \ref{heatk} and $\la_j\approx 2^m$
		\begin{align}|\int_0^{2^{-2m}}e^{t\la_j^2}&\sum_{\tau_k}e^{-t\tau_k^2}e_{\tau_k}(x)e_{\tau_k}(y)dt|\\ \nonumber
			&\ls \int_0^{2^{-2m}}|\sum_{\tau_k}e^{-t\tau_k^2}e_{\tau_k}(x)e_{\tau_k}(y)|dt\\ \nonumber
			&\ls \int_0^{2^{-2m}} t^{-\frac n2}e^{-cd_g(x,y)^2/t}dt\\ \nonumber
			&\ls \begin{cases}
				\log(2+(2^m d_g(x,y))^{-1}),\quad n=2\\\nonumber
				d_g(x,y)^{2-n},\quad\quad\quad\quad\quad\quad\  n\ge3
			\end{cases}\\ \nonumber
			&\ls W_n(d_g(x,y)).\end{align}
		Moreover, by Corollary \ref{rough}
		\[\sum_{\la_j}|h(\la_j)\beta(\la_j\approx 2^m)||e_j^0(x)e_j^0(y)|\ls 2^{nm}(1+(\eps\la)^{-1}2^m)^{-N},\ \forall N.\]
		Thus using $V\in \mathcal{K}(M)$ we get
		\begin{align*}
			|\int_M\sum_{2^m\ge5\la}\sum_{\la_j}\sum_{\tau_k}&\int_0^{2^{-2m}} h(\la_j)e^{t(\la_j^2-\tau_k^2)}dt\beta(\la_j\approx 2^m)e_j^0(x)e_j^0(y)e_{\tau_k}(x)e_{\tau_k}(y)V(y)dy|\\
			&\ls \sum_{2^m\ge5\la}2^{nm}(1+(\eps\la)^{-1}2^m)^{-N}\\
			&\ls \eps^N\la^n\le\eps\la^n.
		\end{align*}
		Since $1=\beta(\tau_k\ls 2^m)+\beta(\tau_k\gs 2^m)$ when $2^m\ge 5\la$,  it remains to estimate
		\[ \int_M\sum_{2^m\ge5\la}\sum_{\la_j}\sum_{\tau_k}\int_0^{2^{-2m}} M_{23}(\la_j)e_j^0(x)e_j^0(y)e_{\tau_k}(x)e_{\tau_k}(y)V(y)dy,\]
		where 
		$$ M_{23}(\la_j)=\int_0^{2^{-2m}} h(\la_j)e^{t(\la_j^2-\tau_k^2)}\beta(\la_j\approx 2^m)\beta(\tau_k\ls 2^m) \,dt
		$$
		Indeed, $M_{23}$
		is a symbol in $S^{-2}$ which satisfies
		\[|\partial_{\la_j}^\ell M_{23}(\la_j)|\ls (1+\la_j)^{-2-\ell}(1+(\eps\la)^{-1}2^m)^{-N},\ \forall  N,\ \ell=0,1,2,....\]
		So by Lemma \ref{pdo}, the kernel satisfies
		\[|M_{23}(P^0)(x,y)|\ls W_n(d_g(x,y))(1+(\eps\la)^{-1}2^m)^{-N}.\]
		By Corollary \ref{rough}
		\[\sum_{\tau_k\ls 2^m}|e_{\tau_k}(x)e_{\tau_k}(y)|\ls 2^{mn}.\]
		Thus
		\begin{align*}
			|\int_M\sum_{2^m\ge5\la}\sum_{\la_j}\sum_{\tau_k}\int_0^{2^{-2m}} &M_{23}(\la_j)e_j^0(x)e_j^0(y)e_{\tau_k}(x)e_{\tau_k}(y)V(y)dy|\\
			&\ls \sum_{2^m\ge 5\la}2^{mn}(1+(\eps\la)^{-1}2^m)^{-N}\\
			&\ls \eps^N\la^n\le\eps\la^n.
		\end{align*}

		\noindent \textbf{3. Estimate of \eqref{eq3'}.}
		By our construction of the cut-off functions, we have $\tau_k>2\la_j$ on the support of $\beta(\la_j\ls \la)$ if $\tau_k>10\la$, so
		\[M_{31}: \la_j\mapsto \frac{h(\tau_k)}{\la_j^2-\tau_k^2}\beta(\la_j\ls\la)\]
		is a symbol in $S^{-2}$ and satisfies
		\[|\partial_{\la_j}^\ell M_{31}(\la_j)|\ls (1+\la_j)^{-2-\ell}(1+(\eps\la)^{-1}\tau_k)^{-N},\ \forall  N,\ \ell=0,1,2,....\]
		Then by Lemma \ref{pdo} the kernel satisfies
		\[|M_{31}(P^0)(x,y)|\ls W_n(d_g(x,y))(1+(\eps\la)^{-1}\tau_k)^{-N},\]
		and again Corollary \ref{rough} implies
		\[\sum_{\tau_k>10\la}(1+(\eps\la)^{-1}\tau_k)^{-N}|e_{\tau_k}(x)e_{\tau_k}(y)|\ls\eps^N\la^n\le\eps\la^n.\]
		Thus using $V\in \mathcal{K}(M)$ we get
		\[|\int_M\sum_{\la_j}\sum_{\tau_k>10\la}M_{31}(\la_j)e_j^0(x)e_j^0(y)e_{\tau_k}(x)e_{\tau_k}(y)V(y)dy|\ls \eps\la^n.\]
		Moreover, we write
		\begin{align*}\frac{-h(\la_j)}{\la_j^2-\tau_k^2}\1(\tau_k>10\la)\beta(\la_j\ls\la)&=\int_0^\infty h(\la_j)e^{t(\la_j^2-\tau_k^2)}dt\1(\tau_k>10\la)\beta(\la_j\ls\la)\\
			&=\int_0^{\la^{-2}} h(\la_j)e^{t(\la_j^2-\tau_k^2)}dt  \beta(\la_j\ls\la)\\ &\quad-\int_0^{\la^{-2}} h(\la_j)e^{t(\la_j^2-\tau_k^2)}dt\1(\tau_k\le10\la)\beta(\la_j\ls\la)\\ &\quad -\frac{h(\la_j)e^{\la^{-2}(\la_j^2-\tau_k^2)}}{\la_j^2-\tau_k^2}\1(\tau_k>10\la)\beta(\la_j\ls\la)\\
			&:=M_{34}(\la_j)-M_{33}(\la_j)-M_{32}(\la_j).\end{align*}
		In the following, we handle these three parts separately.
		First, let
		\[\tilde M_{32}:\mu\mapsto  \frac{h(\la\mu)e^{-\mu^2}}{\mu^2-(\tau_k/\la)^2}\beta(\mu\ls1)\1(\tau_k>10\la).\]
		It is a symbol in $S^{-2}$ and satisfies
		\[|\partial_{\mu}^\ell \tilde M_{32}(\mu)|\ls \eps^{-\ell}(1+\mu)^{-2-\ell},\ \ell=0,1,2,....\]
		Here $\beta(\mu\ls 1)$ is supported on $\{\mu\le5\}$.
		Thus by Lemma \ref{pdo} the kernel satisfies for some constant $n_0>0$ (only dependent on $n$)
		\begin{align}\label{ker01}|M_{32}(P^0)(x,y)|&=\la^{-2}|\tilde M_{32}(P^0/\la)(x,y)|\\ \nonumber
	&\ls \begin{cases}
		\eps^{-n_0}\log(2+(\la d_g(x,y))^{-1})(1+\la d_g(x,y))^{-N},\ n=2\\\nonumber
		\eps^{-n_0}d_g(x,y)^{2-n}(1+\la d_g(x,y))^{-N},\quad\quad\quad\quad\quad n\ge3
	\end{cases}\\ \nonumber
			&\ls 
			\eps^{-n_0}W_n(d_g(x,y))(1+\la d_g(x,y))^{-N},\ \forall N.\end{align}
		By Corollary \ref{rough},
		\[\sum_{\tau_k>10\la}e^{-\la^{-2}\tau_k^2}|e_{\tau_k}(x)e_{\tau_k}(y)|\ls \sum_{\ell\in\mathbb{N}:2^\ell>10\la}e^{-\la^{-2}2^{2\ell}}2^{n\ell}\ls\la^n.\]
		Fix $N=1$ in \eqref{ker01}. Thus, using \eqref{ker01} and $V\in \mathcal{K}(M)$ we may choose $\Lambda(\eps,V)<\infty$ so that for $\la>\Lambda(\eps,V)$
		\begin{align}\label{m32}
			\int_M& \la^{-2}|M_{32}(P^0/\la)(x,y)V(y)|dy\\ \nonumber
			&\ls \eps^{-n_0}\int_{d_g(x,y)\le\la^{-1/2}}W_n(d_g(x,y))|V(y)|dy+\eps^{-n_0}\la^{-1/2}\int_MW_n(d_g(x,y))|V(y)|dy\\ \nonumber
			&\ls\eps. \nonumber
		\end{align}
		So we have
		\begin{align}\label{spec01}
			|\int_M\sum_{\la_j}\sum_{\tau_k>10\la}&M_{32}(\la_j)e_j^0(x)e_j^0(y)e_{\tau_k}(x)e_{\tau_k}(y)V(y)dy|\ls \eps\la^n.
		\end{align}
		Second, if $\tau_k\le10\la$, then
		\[\tilde M_{33}:\mu\mapsto \int_0^{1} h(\la\mu)e^{t(\mu^2-(\tau_k/\la)^2)}dt\beta(\mu\ls1)\1(\tau_k\le10\la)\]
		is a symbol in $S^{-2}$ and satisfies
		\[|\partial_{\mu}^\ell \tilde M_{33}(\mu)|\ls \eps^{-\ell}(1+\mu)^{-2-\ell},\ \ell=0,1,2,....\]
		So by Lemma \ref{pdo}, the kernel satisfies for some constant $n_0>0$ (only dependent on $n$)
		\begin{align}\label{ker03}|M_{33}(P^0)(x,y)|&=\la^{-2}|\tilde M_{33}(P^0/\la)(x,y)|\\ \nonumber
			&\ls \begin{cases}
				\eps^{-n_0}\log(2+(\la d_g(x,y))^{-1})(1+\la d_g(x,y))^{-N},\ n=2\\\nonumber
				\eps^{-n_0}d_g(x,y)^{2-n}(1+\la d_g(x,y))^{-N},\quad\quad\quad\quad\quad n\ge3
			\end{cases}\\ \nonumber
			&\ls 
			\eps^{-n_0}W_n(d_g(x,y))(1+\la d_g(x,y))^{-N},\ \forall N.\end{align}
		By Corollary \ref{rough}
		\[\sum_{\tau_k\le 10\la}|e_{\tau_k}(x)e_{\tau_k}(y)|\ls \la^n.\]
		Thus, as in \eqref{m32}, using \eqref{ker03} and $V\in\mathcal{K}(M)$ we may choose $\Lambda(\eps,V)<\infty$ so that for $\la>\Lambda(\eps,V)$
		\begin{align}\label{spec03}
			|\int_M\sum_{\la_j}\sum_{\tau_k\le10\la} M_{33}(\la_j)e_j^0(x)e_j^0(y)e_{\tau_k}(x)e_{\tau_k}(y)V(y)dy|\ls \eps\la^n.
		\end{align}
		Third, by heat kernel bounds Lemma \ref{heatk} and $\la_j\ls\la$
		\begin{align}\label{ker02} |\int_0^{\la^{-2}}e^{t\la_j^{2}}\sum_{\tau_k}e^{-t\tau_k^2}&e_{\tau_k}(x)e_{\tau_k}(y)dt|\\ \nonumber
		&\ls \int_0^{\la^{-2}}|\sum_{\tau_k}e^{-t\tau_k^2}e_{\tau_k}(x)e_{\tau_k}(y)|dt\\ \nonumber
		&\ls \int_0^{\la^{-2}} t^{-\frac n2}e^{-cd_g(x,y)^2/t}dt\\\nonumber
			&\ls \begin{cases}
				\log(2+(\la d_g(x,y))^{-1})(1+\la d_g(x,y))^{-N},\ n=2\\\nonumber
				d_g(x,y)^{2-n}(1+\la d_g(x,y))^{-N},\quad\quad\quad\quad\quad n\ge3
			\end{cases}\\ \nonumber
			&\ls 
			W_n(d_g(x,y))(1+\la d_g(x,y))^{-N},\ \forall N.\end{align}
		By Corollary \ref{rough}
		\begin{align}\label{specsum}\sum_{\la_j}|h(\la_j)\beta(\la_j\ls\la)||e_j^0(x)e_j^0(y)|\ls \la^n.\end{align}
		Again, as in \eqref{m32}, using \eqref{ker02} and $V\in\mathcal{K}(M)$ we may choose $\Lambda(\eps,V)<\infty$ so that for $\la>\Lambda(\eps,V)$
		\begin{align}\label{spec02}
			|\int_M\sum_{\la_j}\sum_{\tau_k}M_{34}(\la_j)e_j^0(x)e_j^0(y)e_{\tau_k}(x)e_{\tau_k}(y)V(y)dy|\ls\eps\la^n.
		\end{align}
		So the proof is complete.

		\section{Sharp pointwise Weyl Law}
		In this section, we prove Theorem \ref{thm2}: the sharp pointwise Weyl Law for $-\Delta_g+V$
		\[\ola(P_V)(x,x)=\frac{\omega_n}{(2\pi)^{n}}\lambda^{n}+O(\lambda^{n-1}).\]

		From now on, we fix $\eps=\la^{-1}$ in the definitions of $h(\tau)$ and $\tilde\chi_\la(\tau)$ in \eqref{defin}, \eqref{defchi}. They satisfy the following rapid decay properties: for $\tau>0$
		\[|h(\tau)-\ola(\tau)|\ls(1+|\tau-\la|)^{-N},\ \forall N,\] \begin{equation}\label{hdao'}
			|\partial_\tau^j h(\tau)|\ls (1+|\tau-\la|)^{-N},\ \forall N,\ j=1,2,....
		\end{equation}
		\begin{equation}\label{cdao'}|\partial_{\tau}^j\tilde\chi_\la(\tau)|\ls (1+|\tau-\la|)^{-N}, \forall N,\ j=0,1,2,....\end{equation} 
		
		We shall need the following lemma whose proof we postpone to the next section.
		\begin{lemma}\label{hdiff'} Let $n\ge2$ and $V\in L^n(M)$. Then\[\sup_{x\in M}|(h(P_V)-h(P^0))(x,x)|\le C_V \la^{n-1}.\]
		\end{lemma}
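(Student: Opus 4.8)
The plan is to follow the same reduction as in the proof of Lemma \ref{hdiff}: by Duhamel's principle one has
\[
(h(P_V)-h(P^0))(x,x)=\sum_{\lambda_j}\sum_{\tau_k}\int_M \frac{h(\lambda_j)-h(\tau_k)}{\lambda_j^2-\tau_k^2}\,e_j^0(x)e_j^0(y)e_{\tau_k}(x)e_{\tau_k}(y)V(y)\,dy,
\]
and the task is to bound this by $C_V\lambda^{n-1}$ with the sharp-scale choice $\eps=\lambda^{-1}$. I would split the $\tau_k$-sum into the low-frequency part $\tau_k\le 10\lambda$ and the high-frequency part $\tau_k>10\lambda$, exactly as before. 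The high-frequency part should go through essentially verbatim from Section 2: the symbol estimates there all produce a gain $(1+(\eps\lambda)^{-1}\tau_k)^{-N}=(1+\tau_k/\lambda\cdot\lambda)^{-N}$... more precisely with $\eps=\lambda^{-1}$ the cutoff becomes $(1+\tau_k)^{-N}$-type decay times $\lambda^n$, which after summing against Corollary \ref{rough} and using only $V\in L^1\supset L^n$ gives a contribution $O(\lambda^{-N})$ for all $N$, hence harmless. So the real content is the low-frequency regime, where one must extract the extra power of $\lambda$ beyond the trivial bound.

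For the low-frequency part I would again set $M_0:\lambda_j\mapsto (h(\lambda_j)-h(\tau_k))/(\lambda_j^2-\tau_k^2)$, which by \eqref{hdao'} lies in $S^{-2}$ with seminorms now of size $(1+|\lambda_j-\lambda|+ \text{stuff})$ — concretely the rescaled symbol $\tilde M_0(\mu)=\tau_k^2 M_0(\tau_k\mu)$ is a symbol of order $-2$ with $\tau_k$-independent constants, so by the Corollary to Lemma \ref{pdo} the kernel of $M_0(P^0)$ obeys
\[
\Bigl|\sum_{\lambda_j}\frac{h(\lambda_j)-h(\tau_k)}{\lambda_j^2-\tau_k^2}e_j^0(x)e_j^0(y)\Bigr|\ls \tau_k^{n-2}W_n(\tau_k d_g(x,y))(1+\tau_k d_g(x,y))^{-\sigma}.
\]
Now I would pair this with $\sum_{\tau_k\approx 2^m}|e_{\tau_k}(x)e_{\tau_k}(y)|\ls 2^{mn}$ (Corollary \ref{rough}) and integrate against $V$. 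The key new input is Hölder: with $V\in L^n(M)$, for the dyadic piece $\tau_k\approx 2^m=:\mu$ one estimates
\[
\int_M \mu^{n-2}\,W_n(\mu d_g(x,y))(1+\mu d_g(x,y))^{-\sigma}\,|V(y)|\,dy
\ls \mu^{n-2}\cdot\mu^{-n}\cdot\mu^{n/n'}\cdot\|V\|_{L^n}= \mu^{n-2-n+n-1}\|V\|_n=\mu^{n-3}\cdot\mu\cdots
\]
— more carefully, the function $y\mapsto \mu^{n}W_n(\mu d_g(x,y))(1+\mu d_g(x,y))^{-\sigma}$ is an $L^1$-normalized bump at scale $\mu^{-1}$, so its $L^{n'}$ norm (where $1/n+1/n'=1$) is $\ls \mu^{n}\cdot\mu^{-n/n'}=\mu^{n/n}= \mu$; hence the dyadic integral is $\ls \mu^{-2}\cdot\mu\cdot\|V\|_n\cdot 2^{mn}=2^{m(n-1)}\|V\|_n$. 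Summing over $2^m\le 10\lambda$ gives a geometric series dominated by its top term $\lambda^{n-1}\|V\|_n$, which is exactly the claimed bound. (One must double-check the borderline $n=2$ case where $W_2$ is logarithmic; the log is absorbed since $n'=2$ is finite and $\|\log\|$ against a compact bump is finite, possibly at the cost of a harmless constant.)

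The main obstacle I anticipate is making the Hölder step genuinely uniform in the dyadic parameter and in $x$: one needs the bound $\||\cdot|^{n}W_n(|\cdot|)(1+|\cdot|)^{-\sigma}\|_{L^{n'}(\mathbb{R}^n)}<\infty$, i.e. that the model kernel is in $L^{n'}$ near the origin — for $n\ge 3$ this requires $(n-2)n'<n$, equivalently $n'<n/(n-2)$, which holds since $n'=n/(n-1)$ and $n-1>n-2$. Good. But the tail decay $(1+|\cdot|)^{-\sigma}$ must be taken with $\sigma$ large enough that the kernel is also in $L^{n'}$ at infinity — this is free since $\sigma$ is arbitrary in \eqref{eq7}-type estimates. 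A secondary subtlety is that the symbol bounds for $M_0$ when $\eps=\lambda^{-1}$ and $\tau_k$ ranges over $[1,10\lambda]$ genuinely have $\tau_k$-uniform constants only after the rescaling $\tilde M_0(\mu)=\tau_k^2M_0(\tau_k\mu)$; I would verify the analogue of \eqref{eq6} holds with $\eps=\lambda^{-1}$ replaced appropriately, i.e. that $|\partial_\mu^\ell \tilde M_0(\mu)|\ls (1+\mu)^{-2-\ell}$ uniformly — this follows from \eqref{hdao'} by the same Taylor-remainder argument \eqref{dao}-\eqref{eq3} as in Section 2, since the only scale in $h$ is now $|\tau-\lambda|\lesssim 1$, which is dominated by $\tau_k$ on the support. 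Once these uniformity points are in place, the rest is the dyadic summation carried out above, and the $L^n$ hypothesis enters exactly at the Hölder pairing against the $L^1$-normalized parametrix bump, which is where the gain from $\eps\lambda^n$ down to $\lambda^{n-1}$ comes from. For Theorem \ref{thm2} itself, one then combines this lemma with the analogue of Lemma \ref{chi} at scale $\eps=\lambda^{-1}$ and the Tauberian/telescoping argument of Section 1, now with geometric-in-$(1+\lambda^{-1})^j$ sums replaced by the cleaner estimate coming from the $(1+|\tau-\lambda|)^{-N}$ decay, to pass from the smoothed kernel $h(P_V)$ to the sharp spectral projector $\ola(P_V)$.
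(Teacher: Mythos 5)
There are two genuine gaps, one in each half of your argument. First, the high-frequency part does \emph{not} go through verbatim with $\eps=\la^{-1}$. The terms with the decay factor $(1+(\eps\la)^{-1}\tau_k)^{-N}$ indeed become harmless, but the pieces $M_{32}$, $M_{33}$, $M_{34}$ (i.e.\ the contributions \eqref{spec01}, \eqref{spec03}, \eqref{spec02}) carry no such decay: their kernel bounds \eqref{ker01}, \eqref{ker03} have a factor $\eps^{-n_0}$, and in Section 2 the smallness is extracted from the Kato condition by taking $\Lambda(\eps,V)$ large. With $\eps=\la^{-1}$ the factor $\eps^{-n_0}=\la^{n_0}$ destroys that argument, and these three terms are precisely where the paper must invoke $V\in L^n$, H\"older against the $L^{n/(n-1)}$ norm of the heat-type kernel (cf.\ \eqref{int01}), and Lemma \ref{delta2} with Sobolev estimates.

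Second, and more seriously, your key kernel bound for the low-frequency part is false. With $\eps=\la^{-1}$ the function $h$ transitions from $1$ to $0$ over a window of unit length around $\tau=\la$, so $h'(\tau)\approx 1$ there rather than $O(\la^{-1})$; consequently the seminorms of $M_0$ in $S^{-2}$ carry the factors $\eps^{-1-\ell}=\la^{1+\ell}$ from \eqref{eq4}, and these do not disappear under the rescaling $\tilde M_0(\mu)=\tau_k^2M_0(\tau_k\mu)$. Concretely, $\tilde M_0(1)=\tfrac12\tau_k h'(\tau_k)\approx\la$ when $\tau_k$ is near $\la$, so the claimed uniform estimate $|\partial_\mu^\ell\tilde M_0(\mu)|\ls(1+\mu)^{-2-\ell}$ fails, and the kernel bound $\ls\tau_k^{n-2}W_n(\tau_k d_g(x,y))(1+\tau_k d_g(x,y))^{-\sigma}$ is off by (at least) a factor of $\la$ exactly in the regime $\tau_k,\la_j\approx\la$ that dominates the count. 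Your dyadic H\"older computation is arithmetically fine \emph{given} that bound, but the bound cannot be "verified as in \eqref{dao}--\eqref{eq3}". This is why the paper isolates the middle frequencies $\la/2\le\tau_k\le10\la$ and replaces pointwise parametrix bounds by the decomposition in $|\la_j-\tau_k|\approx 2^\ell$ (the kernels $K_{\tau_k,\ell}^{\pm}$, $R_{\tau_k,\ell}$, etc.), controlled in $L^{2n/(n-2)}$ via the rescaled spectral projection estimate \eqref{sper} (Lemma \ref{K3}), combined with Lemma \ref{delta2} and the projection bound for $H_V$ (Lemma \ref{bssbound}) to sum over $\tau_k$; none of these ingredients appears in your proposal, and without them the loss of $\la$ near the spectral shell cannot be recovered.
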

	\begin{lemma}[Spectral projection bounds for $H_V$, \cite{BSS}]\label{bssbound}Let $n\ge2$, if $\,V\in \mathcal{K}(M)\cap L^{n/2}(M)$, then for $\la\ge1$, we have
	\begin{equation}
		\sup_{x\in M}\1_{[\la,\la+1)}(P_V)(x,x)\ls \la^{n-1}.
	\end{equation}
\end{lemma}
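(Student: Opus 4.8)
Since $\1_{[\la,\la+1)}(P_V)=\sum_{\tau_k\in[\la,\la+1)}e_{\tau_k}\otimes e_{\tau_k}$ is a nonnegative operator with continuous diagonal, Cauchy--Schwarz gives the identity $\sup_{x\in M}\1_{[\la,\la+1)}(P_V)(x,x)=\|\1_{[\la,\la+1)}(P_V)\|_{L^2(M)\to L^\infty(M)}^2$, so the lemma is equivalent to the operator bound $\|\1_{[\la,\la+1)}(P_V)\|_{L^2\to L^\infty}\ls\la^{(n-1)/2}$. Take $\eps=\la^{-1}$ in \eqref{defchi}; since the fixed $\chi$ obeys $\chi\ge1$ on $[-1,1]$ and $\chi\ge0$, one has the pointwise domination $\1_{[\la,\la+1)}(\tau)\le\tilde\chi_\la(\tau)$ for $\tau\ge0$, and applying the functional calculus of $P_V$ to both sides reduces the lemma to
\[
\sup_{x\in M}\tilde\chi_\la(P_V)(x,x)\ls\la^{\,n-1},\qquad \la\ \text{large}
\]
(bounded $\la$ being trivial by finiteness of $N_V(\la)$). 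By \eqref{defchi} one has $\tilde\chi_\la(P_V)=\tfrac1\pi\int\hat\chi(t)e^{it\la}\cos(tP_V)\,dt$ with $\hat\chi$ supported in $|t|<\tfrac12<\mathrm{Inj}(M)$, so finite propagation speed for the perturbed wave equation confines the kernels below to geodesic balls of radius $<\tfrac12$, where the Kato condition is felt only locally.

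Next, run the Duhamel comparison of Section 2, with perturbation $W:=V-1\in L^{n/2}(M)\cap\mathcal K(M)$. Exactly as in the derivation there of the formula for $h(P_V)-h(P^0)$,
\[
\tilde\chi_\la(P_V)(x,x)=\tilde\chi_\la(P^0)(x,x)+\sum_{j,k}\frac{\tilde\chi_\la(\la_j)-\tilde\chi_\la(\tau_k)}{\la_j^2-\tau_k^2}\,e^0_j(x)\,\langle e^0_j,\,W e_{\tau_k}\rangle\,e_{\tau_k}(x),
\]
and iterating the Duhamel identity expands the last sum into a Neumann-type series in powers of $W$. The free term is already under control: Lemma \ref{sogge888} with $p=\infty$ gives $\1_{[\mu,\mu+1)}(P^0)(x,x)\ls\mu^{n-1}$, and summing unit frequency bands against the rapid decay $\tilde\chi_\la(\mu)\ls(1+|\mu-\la|)^{-N}$ of \eqref{cdao'} yields $\tilde\chi_\la(P^0)(x,x)\ls\la^{n-1}$, just as invoked in Section 1.

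It remains to bound the $W$-series by $C_V\la^{n-1}$, which is the heart of the matter. Each free factor in a term of the series has order $-1$ (a truncated $P^0$-wave propagator divided by $P^0$, spectrally localized near frequency $\la$ after the $t$-integration), so it obeys the kernel estimates of Lemma \ref{pdo} and, more importantly, Sogge's $L^q\to L^r$ spectral cluster estimates over their full range; multiplication by $W$ is controlled by H\"older via $W\in L^{n/2}(M)$ and, near the diagonal, by the Kato norm of $W$ (which can be made as small as desired by shrinking $\supp\hat\chi$); and $\cos(sP_V)$ is bounded on $L^2$. Splitting $W=W_{\le N}+W_{>N}$ into its truncation at height $N$ and the remainder, with $\|W_{>N}\|_{L^{n/2}(M)}$ and $\sup_x\int_{d_g(x,y)<\delta}|W_{>N}(y)|W_n(d_g(x,y))\,dy$ both as small as we wish, one bounds the $W_{>N}$-part of the series by a quantity tending to $0$ as $N\to\infty$ times $\sup_x\tilde\chi_\la(P_V)(x,x)$, and the bounded part $W_{\le N}\in L^\infty(M)\subset L^{n/2}(M)$ by $C_{N,V}\la^{n-1}$, so that, taking $N$ large and absorbing, $\sup_x\tilde\chi_\la(P_V)(x,x)\ls_V\la^{n-1}$. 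The decisive, and most delicate, ingredient is the $\la$-uniform $L^2\to L^2$ boundedness of the composition ``free cluster at energy $\approx\la$ $\mapsto$ multiply by $W$ $\mapsto$ free cluster'', i.e.\ of the Birman--Schwinger-type operator $|W|^{1/2}\tilde\chi_\la(P^0)|W|^{1/2}$; on a general compact $(M,g)$ this rests on the uniform Sobolev / Stein--Tomas-type resolvent estimates of Dos Santos Ferreira--Kenig--Salo and Bourgain--Shao--Sogge--Yao, supplemented by the Kato hypothesis to absorb the part of the local singularity of $W$ that the $L^p$ theory cannot see (recall $L^{n/2}(M)\not\subset\mathcal K(M)$ for $n\ge3$; when $n=2$, $L^{n/2}=L^1\supset\mathcal K(M)$, so the hypothesis reduces to $V\in\mathcal K(M)$ and only the Kato term survives). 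Establishing this uniform resolvent analysis on a general manifold, rather than on $\R^n$ or the flat torus, is the main obstacle; it is the content of \cite{BSS}, whose argument we follow. $\qed$
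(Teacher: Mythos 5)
The paper does not prove this lemma at all: it is imported verbatim from \cite{BSS} (note the citation in the lemma's title), and nothing in the body or the Appendix establishes it. So there is no in-paper argument to compare yours against; the only fair comparison is between your sketch and a complete proof, and measured that way your proposal has a genuine gap. Your opening reduction is fine: the identity $\sup_x\Pi(x,x)=\|\Pi\|_{L^2\to L^\infty}^2$ for the finite-rank projector, the pointwise domination $\1_{[\la,\la+1)}(\tau)\le\tilde\chi_\la(\tau)$ with $\eps=\la^{-1}$, and the bound $\tilde\chi_\la(P^0)(x,x)\ls\la^{n-1}$ from Lemma \ref{sogge888} are all correct and standard. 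But from the Duhamel expansion onward you assert rather than prove every substantive step: the convergence of the Neumann series, the claim that the $W_{>N}$-part is dominated by a small multiple of $\sup_x\tilde\chi_\la(P_V)(x,x)$, and above all the $\la$-uniform bound on $|W|^{1/2}\tilde\chi_\la(P^0)|W|^{1/2}$ on $L^2$. The last point is not a technicality one can wave at: the naive estimate via H\"older and Lemma \ref{sogge888} at $p=\tfrac{2n}{n-2}$ gives $\|W\|_{n/2}\,\la$, a full power of $\la$ worse than what the absorption scheme needs, and closing that gap is precisely the content of \cite{BSS}. Your final sentence concedes this, so what you have written is a roadmap of \cite{BSS}, not a proof.

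One further caution about the route you chose. You cannot substitute the paper's own comparison machinery (Lemma \ref{hdiff'} or its $\tilde\chi_\la$ analogue) for the missing step, because the proof of Lemma \ref{hdiff'} in Section 4 explicitly invokes Lemma \ref{bssbound} in \eqref{3.15}; that shortcut is circular. Your single-application Duhamel identity with absorption avoids that circularity in principle (Corollary \ref{rough} supplies the a priori finiteness $\ls\la^n$ needed to absorb), but then the absorption must improve $\la^n$ to $\la^{n-1}$, which again is exactly the uniform-resolvent input you have not supplied. If the intent is simply to match the paper, the correct move is to cite \cite{BSS} and stop; if the intent is to prove the lemma, the Birman--Schwinger estimate must be carried out in full.
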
	
The condition  $L^{n/2}$ in Lemma \ref{bssbound} can be dropped when $n=2,3$, see \cite{BSS}, \cite{fs}.

Using the classical pointwise Weyl Law for $P^0$ (see e.g. \cite{SoggeHangzhou})
		\[h(P^0)(x,x)=\frac{\omega_n}{(2\pi)^{n}}\lambda^{n}+O(\lambda^{n-1}).\]
		Then by Lemma \ref{hdiff'}, we get
		\[h(P_V)(x,x)=\frac{\omega_n}{(2\pi)^{n}}\lambda^{n}+O(\lambda^{n-1}).\]
		Since $V\in L^n(M)\subset \mathcal{K}(M)\cap L^{n/2}(M)$, we have \[\1_{[\la,\la+1)}(P_V)(x,x)=O(\la^{n-1}),\]
		which follows from the spectral projection bounds for $H_V$ (Lemma \ref{bssbound}).
		
		Hence
		\begin{align*}\Big|(\ola(P_V)-h(P_V))(x,x)\Big|&\le \sum_{k}|\ola(\tau_k)-h(\tau_k)||e_{\tau_k}(x)|^2\\
			&\ls  \sum_{\mu=0}^\infty (1+|\mu-\la|)^{-N}\sum_{\tau_k\in[\mu,\mu+1)}|e_{\tau_k}(x)|^2\\
			&\ls \sum_{\mu=0}^\infty (1+|\mu-\la|)^{-N}\cdot (1+\mu)^{n-1}\\
			&\ls \lambda^{n-1}.\end{align*}
		So  \[\ola(P_V)(x,x)=\frac{\omega_n}{(2\pi)^n}\la^n+O(\la^{n-1}).\]
		To complete the proof, we only need to prove Lemma \ref{hdiff'}.
		\section{Proof of Lemma  \ref{hdiff'}}
		By the same reduction argument using the Duhamel's principle  as in Section 2, it suffices to show \begin{align}\label{main2}\Bigl| \, \sum_{\lambda_j}
			\sum_{\tau_k}\int_M
			\frac{h(\la_j)-h(\tau_k)}{\la_j^2-\tau_k^2} e_j^0(x)e_j^0(y)e_{\tau_k}(x)
			e_{\tau_k}(y)V(y) \, dy\, \Bigr|\ls \|V\|_{L^n(M)}\la^{n-1}.\end{align}
		Split the sum into two parts\[\sum_{\lambda_j}
		\sum_{\tau_k}=\sum_{\tau_k\le10\la}\sum_{\la_j}+\sum_{\tau_k>10\la}\sum_{\la_j}\]
		We first deal with the high-frequency part. Note that when $\tau_k>10\la$, we may simply fix $\eps=\la^{-1}$  in the proof of the high-frequency part of Lemma \ref{hdiff} (only assuming $V\in\mathcal{K}(M)$) to get the desired bound $O(\la^{n-1})$, except in \eqref{spec01}, \eqref{spec03}, \eqref{spec02}. The reason is that the kernel estimates in these three terms have some negative powers of $\eps$, which are not good enough if $\eps=\la^{-1}$. To get around the difficulty, we need to use the condition $V\in L^n(M)$.
		
		 First, we handle \eqref{spec02} by using H\"older inequality and the estimates \eqref{ker02}  we get
		\begin{align}
			|\int_M\sum_{\la_j}\sum_{\tau_k}&\int_0^{\la^{-2}} h(\la_j)e^{t(\la_j^2-\tau_k^2)}dt\beta(\la_j\ls\la)e_j^0(x)e_j^0(y)e_{\tau_k}(x)e_{\tau_k}(y)V(y)dy|\\
			&\ls \la^n\cdot\|V\|_{L^n(M)}\cdot \|\int_0^{\la^{-2}}\Big|\sum_{\tau_k}e^{-t\tau_k^2}e_{\tau_k}(x)e_{\tau_k}(\cdot)\Big|dt\|_{L^{\frac{n}{n-1}}(M)}\\
			&\ls \|V\|_{L^n(M)} \la^{n-1},
		\end{align}
		where in the last inequality we used \eqref{ker02} and the fact that 
		 \begin{equation}\label{int01}\| \la^{n-2}W_n(\la d_g(x,\cdot))(1+\la d_g(x,\cdot))^{-N} \|_{L^{\frac{n}{n-1}}(M)}\ls \la^{-1}, \,\,\,\text{if}\,\,\, N>n.\end{equation}

		To handle \eqref{spec01} and \eqref{spec03}, we need to use the following lemma, whose proof can be found in the Appendix. Throughout this paper, we use the convention that  $L^{\frac{2n}{n-2}}(M)$ means $L^\infty(M)$ if $n=2$.	
		\begin{lemma}\label{delta2}  Let $I\subset \R_+$ and for
			eigenvalues $\tau_k\in I$ assume that $\delta_{\tau_k}
			\in [0,\delta]$.  Then if $m\in C^1(\R_+\times M)$, and $V\in L^{n}(M)$, we have
			\begin{multline}\label{2.26}
				\int_M \Bigl| \sum_{\tau_k\in I}m(\delta_{\tau_k},y) \,
				a_k V(y)e_{\tau_k}(y)\Bigr| \, dy
				\\
				\le \|V\|_{L^n(M)}\cdot \Bigl(\, \|m(0, \, \cdot\, )\|_{L^{\frac{2n}{n-2}}(M)}
				+\int_0^\delta \bigl\| \tfrac\partial{\partial s}
				m(s, \, \cdot\, )\bigr\|_{L^{\frac{2n}{n-2}}(M)}\, ds
				\, \Bigr) \times  (\sum_{\tau_k\in I}|a_k|^2)^\frac12.
			\end{multline}
		\end{lemma}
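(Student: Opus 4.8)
The plan is to prove Lemma~\ref{delta2} by reducing the quantity on the left-hand side to an $\ell^2$-valued operator bound through Cauchy--Schwarz and the fundamental theorem of calculus in the variable $s$. First I would write $m(\delta_{\tau_k},y)=m(0,y)+\int_0^{\delta_{\tau_k}}\tfrac{\partial}{\partial s}m(s,y)\,ds$, so that by the triangle inequality it suffices to bound
\[
\int_M\Bigl|\sum_{\tau_k\in I}m(0,y)\,a_kV(y)e_{\tau_k}(y)\Bigr|\,dy
\quad\text{and}\quad
\int_M\int_0^\delta\Bigl|\sum_{\tau_k\in I}\1_{[0,\delta_{\tau_k}]}(s)\,\tfrac{\partial}{\partial s}m(s,y)\,a_kV(y)e_{\tau_k}(y)\Bigr|\,ds\,dy
\]
separately (interchanging the $ds$ and $dy$ integrals by Tonelli). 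For the first term, pull $|m(0,y)V(y)|$ out via H\"older with exponents $\tfrac{2n}{n-2}$, $n$, and $2$, writing $\tfrac{n-2}{2n}+\tfrac1n+\tfrac12=1$, to get $\|m(0,\cdot)\|_{L^{2n/(n-2)}(M)}\|V\|_{L^n(M)}\bigl\|\sum_k a_ke_{\tau_k}\bigr\|_{L^2(M)}$, and then observe $\bigl\|\sum_k a_ke_{\tau_k}\bigr\|_{L^2(M)}=(\sum_k|a_k|^2)^{1/2}$ since the $e_{\tau_k}$ are orthonormal. The second term is handled identically for each fixed $s$, with $\1_{[0,\delta_{\tau_k}]}(s)a_k$ in place of $a_k$; since $|\1_{[0,\delta_{\tau_k}]}(s)a_k|\le|a_k|$, the $\ell^2$ sum is again controlled by $(\sum_k|a_k|^2)^{1/2}$, and integrating $\|\tfrac{\partial}{\partial s}m(s,\cdot)\|_{L^{2n/(n-2)}(M)}$ over $s\in[0,\delta]$ produces the stated factor.

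The only genuine subtlety is the borderline case $n=2$, where $\tfrac{2n}{n-2}=\infty$: there one uses the H\"older split $L^\infty\cdot L^2\cdot L^2$ (i.e. $0+\tfrac12+\tfrac12=1$) with $V\in L^2(M)=L^n(M)$, which is exactly why the statement adopts the convention $L^{2n/(n-2)}(M)=L^\infty(M)$ for $n=2$; everything else goes through verbatim. I expect this to be routine --- the main point is simply organizing the H\"older exponents correctly --- so the ``hard part'' is really just making sure the measurability/Tonelli step and the $C^1$ regularity of $m$ in $s$ are invoked cleanly so that the fundamental theorem of calculus and the interchange of integrals are justified. No deeper harmonic analysis is needed here; the work of extracting cancellation from the mixed frequencies happens in the application of this lemma to \eqref{spec01} and \eqref{spec03}, not in the lemma itself.
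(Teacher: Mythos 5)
Your argument is correct and is essentially the proof given in the paper: the same decomposition $m(\delta_{\tau_k},y)=m(0,y)+\int_0^\delta \1_{[0,\delta_{\tau_k}]}(s)\,\tfrac{\partial}{\partial s}m(s,y)\,ds$, followed by H\"older with exponents $\tfrac{2n}{n-2}$, $n$, $2$ (the paper does this in two stages, $L^n\cdot L^{n/(n-1)}$ and then $L^{2n/(n-2)}\cdot L^2$, which is the same thing), Minkowski's inequality in $s$, and orthonormality of the $e_{\tau_k}$ together with $|\1_{[0,\delta_{\tau_k}]}(s)a_k|\le|a_k|$. No gaps.
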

	
		Second, decompose $(10\la,\infty)=\bigcup_{\ell\ge0}I_\ell$, where \[I_\ell=(10\cdot2^\ell\la,10\cdot 2^{\ell+1}\la].\] Then by classical Sobolev estimates
		\begin{equation}\label{sobo}
		\begin{aligned}
		\|\sum_{\la_j}\frac{h(\la_j)e^{\la^{-2}\la_j^2}}{\la_j^2-(10\cdot 2^\ell\la)^2}&\beta(\la_j\ls \la)e_j^0(x)e_j^0(\cdot)\|_{L^{\frac{2n}{n-2}}(M)} \\ &
		\ls\la\cdot\|\sum_{\la_j}\frac{h(\la_j)e^{\la^{-2}\la_j^2}}{\la_j^2-(10\cdot 2^\ell\la)^2}\beta(\la_j\ls \la)e_j^0(x)e_j^0(\cdot)\|_{L^2(M)} \\ &
		\ls (2^\ell\la)^{-2}\la^{n/2+1}, 
		\end{aligned}
		\end{equation}
		and similarly, for $s\in I_\ell$
		\[\|\frac{\partial}{\partial s}\sum_{\la_j}\frac{h(\la_j)e^{\la^{-2}\la_j^2}}{\la_j^2-s^2}\beta(\la_j\ls \la)e_j^0(x)e_j^0(\cdot)\|_{L^{\frac{2n}{n-2}}(M)} \ls (2^\ell\la)^{-3}\la^{n/2+1}. \]
		By Lemma~\ref{delta2} with
		$\delta=10\cdot2^\ell\la$, and Corollary \ref{rough} , we have
		\begin{align*}
			|\int_M\sum_{\la_j}&\sum_{\tau_k\in I_\ell}\frac{h(\la_j)e^{\la^{-2}(\la_j^2-\tau_k^2)}}{\la_j^2-\tau_k^2}\beta(\la_j\ls\la)e_j^0(x)e_j^0(y)e_{\tau_k}(x)e_{\tau_k}(y)V(y)dy|\\
			&\ls \|V\|_{L^n(M)}\cdot (2^\ell\la)^{-2}\la^{n/2}\cdot\la\cdot  (\sum_{\tau_k\in I_\ell}|e^{-\la^{-2}\tau_k^2}e_{\tau_k}(x)|^2)^\frac12\\
			&\ls \|V\|_{L^n(M)}\cdot (2^\ell\la)^{-2}\la^{n/2}\cdot\la\cdot e^{-2^{2\ell}}\cdot (2^\ell\la)^{n/2}\\
			&\ls \|V\|_{L^n(M)}\la^{n-1}\cdot e^{-2^{2\ell}}2^{(n/2-2)\ell}.
		\end{align*}
		Summing over $\ell$, we get
		\begin{align*}
			|\int_M\sum_{\la_j}&\sum_{\tau_k> 10\la}\frac{h(\la_j)e^{\la^{-2}(\la_j^2-\tau_k^2)}}{\la_j^2-\tau_k^2}\beta(\la_j\ls\la)e_j^0(x)e_j^0(y)e_{\tau_k}(x)e_{\tau_k}(y)V(y)dy|\ls \|V\|_{L^n(M)}\la^{n-1}.
		\end{align*}
		Third, for $\tau_k\le10\la$, 
		\[\|\sum_{\la_j}\int_0^{\la^{-2}} h(\la_j)e^{t(\la_j^2-\tau_k^2)}dt\beta(\la_j\ls\la)e_j^0(x)e_j^0(\cdot)\|_{L^2(M)}\ls \la^{-2}\la^{n/2}\]
		and for $s\in[1,10\la]$
		\[\|\frac{\partial}{\partial s}\sum_{\la_j}\int_0^{\la^{-2}} h(\la_j)e^{t(\la_j^2-s^2)}dt\beta(\la_j\ls\la)e_j^0(x)e_j^0(\cdot)\|_{L^2(M)}\ls \la^{-3}\la^{n/2}\]
		By Lemma~\ref{delta2} with
		$\delta=10\la-1$, and using Sobolev estimates and Corollary \ref{rough}  as before, we have
		\begin{align*}
			|\int_M\sum_{\la_j}&\sum_{\tau_k\in[1,10\la]}\int_0^{\la^{-2}} h(\la_j)e^{t(\la_j^2-\tau_k^2)}dt\beta(\la_j\ls\la)e_j^0(x)e_j^0(y)e_{\tau_k}(x)e_{\tau_k}(y)V(y)dy|\\
			&\ls \|V\|_{L^n(M)}\cdot \la^{-2}\la^{n/2}\cdot\la\cdot (\sum_{\tau_k\in [1,10\la]}|e_{\tau_k}(x)|^2)^\frac12\\
			&\ls \|V\|_{L^n(M)}\cdot \la^{-2}\la^{n/2}\cdot\la\cdot \la^{n/2}\\
			&\ls \|V\|_{L^n(M)}\la^{n-1}.
		\end{align*}
		So we finish dealing with \eqref{spec01} and \eqref{spec03}.
		As a result,  we obtain
		\[\Bigl| \, \sum_{\lambda_j}
		\sum_{\tau_k>10\la}\int_M
		\frac{h(\la_j)-h(\tau_k)}{\la_j^2-\tau_k^2} e_j^0(x)e_j^0(y)e_{\tau_k}(x)
		e_{\tau_k}(y)V(y) \, dy\, \Bigr|\ls \|V\|_{L^n(M)}\la^{n-1}.\]
		Therefore,  we only need to deal with $\tau_k\le 10\la$. We want to show
		\begin{multline}\label{mid}\Bigl| \, \sum_{\lambda_j}
			\sum_{\la/2\le \tau_k\le10\la}\int_M
			\frac{h(\la_j)-h(\tau_k)}{\la_j^2-\tau_k^2} e_j^0(x)e_j^0(y)e_{\tau_k}(x)
			e_{\tau_k}(y)V(y) \, dy\, \Bigr| \\ \ls \|V\|_{L^n(M)}\la^{n-1},\end{multline}
		as well as 
		\begin{equation}\label{low}\Bigl| \, \sum_{\lambda_j}
			\sum_{\tau_k<\la/2}\int_M
			\frac{h(\la_j)-h(\tau_k)}{\la_j^2-\tau_k^2} e_j^0(x)e_j^0(y)e_{\tau_k}(x)
			e_{\tau_k}(y)V(y) \, dy\, \Bigr|\ls  \|V\|_{L^n(M)}\la^{n-1}.\end{equation}
			
	\noindent \textbf{1. Low-frequency ($\tau_k<\la/2$).}
		First, we prove the low-frequency estimate \eqref{low}. We may choose smooth cut-off functions to decompose
		\[1=\beta(\la_j\ls \la)+\beta(\la_j\approx \la)+\beta(\la_j\gs \la)\]
		where $\beta(\la_j\ls \la)$ is supported on $\{\la_j<3\la/4\}$, and $\beta(\la_j\gs \la)$ is supported on $\{\la_j>2\la\}$.
		By the rapid decay properties of $h(\tau)$ for $\tau>0$
		\[|h(\tau)-\ola(\tau)|\ls(1+|\tau-\la|)^{-N},\ \forall N,\]
		\[|\partial_{\tau}^\ell h(\tau)|\ls (1+|\tau-\la|)^{-N},\ \forall N,\ \ell=1,2,...,\] we have for any $\sigma>0$
		\begin{multline}\nonumber \Bigl| \, \sum_{\lambda_j}
		\sum_{\tau_k<\la/2}\int_M
		\frac{h(\la_j)-h(\tau_k)}{\la_j^2-\tau_k^2}\beta(\la_j\ls\la) e_j^0(x)e_j^0(y)e_{\tau_k}(x)
		e_{\tau_k}(y)V(y) \, dy\, \Bigr| \\ \ls \|V\|_{L^n(M)}\la^{-\sigma},\end{multline}
		\begin{multline}\nonumber \Bigl| \, \sum_{\lambda_j}
		\sum_{\tau_k<\la/2}\int_M
		\frac{1-h(\tau_k)}{\la_j^2-\tau_k^2}(\beta(\la_j\approx\la)) e_j^0(x)e_j^0(y)e_{\tau_k}(x)
		e_{\tau_k}(y)V(y) \, dy\, \Bigr|\\ \ls\|V\|_{L^n(M)}\la^{-\sigma},\end{multline}
		\begin{multline}\nonumber \Bigl| \, \sum_{\lambda_j}
		\sum_{\tau_k<\la/2}\int_M
		\frac{h(\la_j)}{\la_j^2-\tau_k^2}\beta(\la_j\gs\la) e_j^0(x)e_j^0(y)e_{\tau_k}(x)
		e_{\tau_k}(y)V(y) \, dy\, \Bigr| \\ \ls\|V\|_{L^n(M)}\la^{-\sigma}.\end{multline}
		Here we use the mean-value theorem and the rough eigenfunction bounds (Corollary \ref{rough}). So it remains to show
		\begin{multline}\label{lowhigh}\Bigl| \, \sum_{\lambda_j}
			\sum_{\tau_k<\la/2}\int_M
			\frac{h(\tau_k)}{\la_j^2-\tau_k^2}\beta(\la_j\gs\la) e_j^0(x)e_j^0(y)e_{\tau_k}(x)
			e_{\tau_k}(y)V(y) \, dy\, \Bigr| \\
			\ls \|V\|_{L^n(M)}\la^{n-1},\end{multline}
		and 
		\begin{multline}\label{lowmid}
		\Bigl| \, \sum_{\lambda_j}
			\sum_{\tau_k<\la/2}\int_M
			\frac{h(\la_j)-1}{\la_j^2-\tau_k^2}\beta(\la_j\approx\la) e_j^0(x)e_j^0(y)e_{\tau_k}(x)
			e_{\tau_k}(y)V(y) \, dy\, \Bigr| \\
			\ls \|V\|_{L^n(M)}\la^{n-1}.\end{multline}
		To deal with \eqref{lowhigh}, we note that if $\tau_k\le \la/2$, then
		\[\tilde M_4:\mu\mapsto \frac{1}{\mu^2-(\tau_k/\la)^2}\beta(\mu\gs1)\]
		is a symbol in $S^{-2}$ and satisfies
		\[|\partial_{\mu}^\ell \tilde M_4(\mu)|\ls (1+\mu)^{-2-\ell},\ \ell=0,1,2,....\]
		Here $\beta(\mu\gs1)$ is supported on $\{\mu>2\}$. So we get the kernel estimate
		\begin{align} \label{kerlowh}|\sum_{\lambda_j}
			\frac{1}{\la_j^2-\tau_k^2}\beta(\la_j\gs\la) &e_j^0(x)e_j^0(y)|\\  \nonumber
			&=\la^{-2}\tilde M_4(P^0/\la)(x,y)\\ \nonumber
			&\ls \begin{cases}
				\log(2+(\la d_g(x,y))^{-1})(1+\la d_g(x,y))^{-N},\ n=2\\
				d_g(x,y)^{2-n}(1+\la d_g(x,y))^{-N},\quad\quad\quad\quad\quad n\ge3.\end{cases} \nonumber
		\end{align}
		Then by H\"older inequality and Corollary \ref{rough} 
		\begin{align*}
			\Bigl| \, \sum_{\lambda_j}
			&\sum_{\tau_k<\la/2}\int_M
			\frac{h(\tau_k)}{\la_j^2-\tau_k^2}\beta(\la_j\gs\la) e_j^0(x)e_j^0(y)e_{\tau_k}(x)
			e_{\tau_k}(y)V(y) \, dy\, \Bigr|\\
			&\ls \la^n\cdot\|V\|_{L^n(M)}\cdot\|\la^{-2}\tilde M_4(P^0/\la)(x,\cdot)\|_{L^{\frac{n}{n-1}}}\\
			&\ls \|V\|_{L^n(M)}\la^{n-1}.
		\end{align*}Here in the last step we use \eqref{int01} again.
		Moreover, to deal with \eqref{lowmid}, we need to use Lemma \ref{delta2} with $\delta=\la/2-1$.
		Since by standard Sobolev estimates
		\[\|\sum_{\lambda_j}
		\frac{h(\la_j)-1}{\la_j^2-1}\beta(\la_j\approx\la) e_j^0(x)e_j^0(\cdot)\|_{L^{\frac{2n}{n-2}}(M)}\ls \la^{-2}\la^{n/2+1},\]
		and for $s\in[1,\la/2]$
		\[\|\frac{\partial}{\partial s}\sum_{\lambda_j}
		\frac{h(\la_j)-1}{\la_j^2-s^2}\beta(\la_j\approx\la) e_j^0(x)e_j^0(\cdot)\|_{L^{\frac{2n}{n-2}}(M)}\ls \la^{-3}\la^{n/2+1},\]
		by using Corollary \ref{rough}  we get
		\begin{align*}
			\Bigl| \, \sum_{\lambda_j}
			&\sum_{\tau_k<\la/2}\int_M
			\frac{h(\la_j)-1}{\la_j^2-\tau_k^2}\beta(\la_j\approx\la) e_j^0(x)e_j^0(y)e_{\tau_k}(x)
			e_{\tau_k}(y)V(y) \, dy\, \Bigr|\\
			&\ls \|V\|_{L^n(M)}\cdot \la^{-2}\la^{n/2}\cdot\la\cdot (\sum_{\tau_k\in [1,\la/2)}|e_{\tau_k}(x)|^2)^\frac12\\
			&\ls \|V\|_{L^n(M)}\cdot \la^{-2}\la^{n/2}\cdot\la\cdot \la^{n/2}\\
			&\ls \|V\|_{L^n(M)}\la^{n-1}.
		\end{align*}
		So we finish the proof of \eqref{low}.
		
	\noindent \textbf{2. Middle-frequency ($\la/2\le \tau_k\le 10\la$).}
	Next, we prove the middle-frequency estimate \eqref{mid}. We choose smooth cut-off functions to decompose
		\[1=\beta(|\la_j-\tau_k|\ls1)+\sum_{\ell\in \mathbb{N}:2^\ell\le\frac{\la}{100}}\beta(|\la_j-\tau_k|\approx 2^\ell)+\beta(|\la_j-\tau_k|\gs\la).\]
Here $\beta(x\ls1)$ is supported on $\{x< \frac12\}$, and $\beta(x\approx 2^\ell)$ is supported on $\{2^{\ell-2}<x<2^\ell\}$.
		Let 
		\[K_{\tau_k}(x,y)=\sum_{\la_j}\frac{h(\la_j)-h(\tau_k)}{\la_j^2-\tau_k^2}\ e_j^0(x)e_j^0(y)\]
		\[K_{\tau_k,0}(x,y)=\sum_{\la_j}\frac{h(\la_j)-h(\tau_k)}{\la_j^2-\tau_k^2}\beta(|\la_j-\tau_k|\ls1) e_j^0(x)e_j^0(y)\]
		\[K_{\tau_k,\ell}^+(x,y)=\sum_{\la_j}\frac{h(\la_j)}{\la_j^2-\tau_k^2}\beta(|\la_j-\tau_k|\approx 2^\ell) e_j^0(x)e_j^0(y)\]
		\[R_{\tau_k,\ell}(x,y)=\sum_{\la_j}\frac{1}{\la_j^2-\tau_k^2}\beta(|\la_j-\tau_k|\approx 2^\ell) e_j^0(x)e_j^0(y)\]
		\[K_{\tau_k,\infty}^+(x,y)=\sum_{\la_j}\frac{h(\la_j)}{\la_j^2-\tau_k^2}\beta(|\la_j-\tau_k|\gs \la) e_j^0(x)e_j^0(y)\]
		\[R_{\tau_k,\infty}(x,y)=\sum_{\la_j}\frac{1}{\la_j^2-\tau_k^2}\beta(|\la_j-\tau_k|\gs \la) e_j^0(x)e_j^0(y)\]
		\[K_{\tau_k,\ell}^-(x,y)=\sum_{\la_j}\frac{h(\la_j)-1}{\la_j^2-\tau_k^2}\beta(|\la_j-\tau_k|\approx 2^\ell) e_j^0(x)e_j^0(y)\]
		\[K_{\tau_k,\infty}^-(x,y)=\sum_{\la_j}\frac{h(\la_j)-1}{\la_j^2-\tau_k^2}\beta(|\la_j-\tau_k|\gs \la) e_j^0(x)e_j^0(y)\]
		When $\tau_k\in(\la,10\la]$, we decompose
		\[K_{\tau_k}=K_{\tau_k,0}+\sum_{2^\ell\le\la/100}(K_{\tau_k,\ell}^+-R_{\tau_k,\ell}h(\tau_k))+K_{\tau_k,\infty}^+-R_{\tau_k,\infty}h(\tau_k)\]
		and when $\tau_k\in[\la/2,\la]$, we decompose
		\[K_{\tau_k}=K_{\tau_k,0}+\sum_{2^\ell\le\la/100}(K_{\tau_k,\ell}^-+R_{\tau_k,\ell}(1-h(\tau_k)))+K_{\tau_k,\infty}^-+R_{\tau_k,\infty}(1-h(\tau_k))\]
		Moreover, we decompose \[1=\eta(\la_j\gs\tau_k)+\eta(\la_j\ls\tau_k)\] where $\eta(\la_j\gs \tau_k)$ is supported on $\{\la_j\ge 2\tau_k\}$, and then write
		\[K_{\tau_k,\infty}^-=H_{\tau_k,\infty}^-.
		+\tilde K_{\tau_k,\infty}^-\]where
		\[H_{\tau_k,\infty}^-(x,y)=-\sum_{\la_j}\frac{\eta(\la_j\gs \tau_k)}{\la_j^2-\tau_k^2}\beta(|\la_j-\tau_k|\gs \la) e_j^0(x)e_j^0(y)\]
		\begin{multline}\nonumber \tilde K_{\tau_k,\infty}^-(x,y)=\sum_{\la_j}\frac{h(\la_j)}{\la_j^2-\tau_k^2}\beta(|\la_j-\tau_k|\gs \la) e_j^0(x)e_j^0(y) \\-\sum_{\la_j}\frac{\eta(\la_j\ls \tau_k)}{\la_j^2-\tau_k^2}\beta(|\la_j-\tau_k|\gs \la) e_j^0(x)e_j^0(y).\end{multline}
		For $\ell \in {\mathbb N}$ with $2^\ell \le \la/100$, let for $j=0,1,2,\dots$
		\begin{equation}\label{3.5'}
			I^-_{\ell,j}=\bigl(\la-(j+1)2^\ell, \, \la
			-j2^\ell\bigr]
			\quad \text{and} \quad
			I^+_{\ell,j}=\bigl(\la+j2^\ell, \,
			\la+(j+1)2^\ell \, \bigr].
		\end{equation}
		
		By the spectral projection estimates and the rapid decay property of $h(\la_j)$, we have the following lemma.
		\begin{lemma}\label{K3}
			If $\ell \in {\mathbb Z}_+$, $2^\ell\le \la/100$,
			and $j=0,1,2,\dots$, we have for each $N\in {\mathbb N}$
			\begin{multline}\label{3.6}
				\|K^{\pm}_{\tau,\ell}(x, \, \cdot )\|_{L^{\frac{2n}{n-2}}(M)}, \, \,
				\|2^\ell \tfrac\partial{\partial\tau}K^{\pm}_{\tau,\ell}(x, \, \cdot  )\|_{L^{\frac{2n}{n-2}}(M)}
				\\
				\lesssim \la^{\frac{n}2-1}(1+j)^{-N},
				\quad \tau\in I_{\ell,j}^\pm \cap [\la/2,10\la].
			\end{multline}
			Also,
			\begin{multline}\label{3.7}
				\|K_{\tau,0}(x, \, \cdot  )\|_{L^{\frac{2n}{n-2}}(M)}, \, \,
				\|\tfrac\partial{\partial\tau}K_{\tau,0}(x, \, \cdot  )\|_{L^{\frac{2n}{n-2}}(M)}
				\\
				\lesssim
				\la^{\frac{n}2-1}(1+j)^{-N}, \quad
				\tau\in I_{0,j}^\pm \cap [\la/2,10\la],
			\end{multline}
			Moreover,
			we also have for $1\le 2^\ell \le \la/100$ and $\tau\in [\la/2,10\la]$
			\begin{equation}\label{3.10}
				\|R_{\tau,\ell}(x, \, \cdot  )\|_{L^{\frac{2n}{n-2}}(M)}, \, \,
				\|2^\ell \tfrac\partial{\partial \tau}R_{\tau,\ell}(x, \, \cdot )\|_{L^{\frac{2n}{n-2}}(M)}
				\lesssim \la^{\frac{n}2-1},
			\end{equation}
			\begin{equation}\label{3.14}
			|R_{\tau,\infty}(x, y)|\lesssim 
			\begin{cases}
				\log(2+(\la d_g(x,y))^{-1})(1+\la d_g(x,y))^{-N},\ n=2\\
				d_g(x,y)^{2-n}(1+\la d_g(x,y))^{-N},\quad\quad\quad\quad\quad n\ge3.\end{cases}
		\end{equation}
			Finally, we have for $\tau\in(\la,10\la]$
			\begin{equation}\label{3.11}
				\|K_{\tau,\infty}^+(x, \, \cdot  )\|_{L^{\frac{2n}{n-2}}(M)}, \, \,
				\|\la \tfrac\partial{\partial \tau}K_{\tau,\infty}^+(x, \, \cdot )\|_{L^{\frac{2n}{n-2}}(M)}
				\lesssim \la^{\frac{n}2-1}
			\end{equation}
			and for $\tau\in[\la/2,\la]$
			\begin{equation}\label{3.12}
				\|\tilde K_{\tau,\infty}^-(x, \, \cdot  )\|_{L^{\frac{2n}{n-2}}(M)}, \, \,
				\|\la \tfrac\partial{\partial \tau}\tilde K_{\tau,\infty}^-(x, \, \cdot )\|_{L^{\frac{2n}{n-2}}(M)}
				\lesssim \la^{\frac{n}2-1},
			\end{equation}
			\begin{align}\label{3.13}
				|H_{\tau,\infty}^-(x,y)| &\ls \begin{cases}
					\log(2+(\la d_g(x,y))^{-1})(1+\la d_g(x,y))^{-N},\ n=2\\
					d_g(x,y)^{2-n}(1+\la d_g(x,y))^{-N},\quad\quad\quad\quad\quad n\ge3.
				\end{cases}
			\end{align}
		
		\end{lemma}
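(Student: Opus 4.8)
The plan is to reduce every assertion to two facts about $P^0$. From Lemma~\ref{sogge888}, since one computes $\sigma(n,\tfrac{2n}{n-2})=\tfrac12$ for every $n\ge2$ (with the convention $\tfrac{2n}{n-2}=\infty$, $\sigma(n,\infty)=\tfrac{n-1}2$, when $n=2$), we have $\|\1_{[\mu,\mu+1)}(P^0)\|_{L^2\to L^{\frac{2n}{n-2}}}\ls(1+\mu)^{1/2}$; and the case $p=\infty$ of the same lemma gives $\sup_x\sum_{\mu\le\la_j<\mu+1}|e_j^0(x)|^2=\sup_x\1_{[\mu,\mu+1)}(P^0)(x,x)\ls(1+\mu)^{n-1}$. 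Decomposing $m(P^0)(x,\cdot)$ into the unit bands $\1_{[\mu,\mu+1)}(P^0)[m(P^0)(x,\cdot)]$ and estimating each band's $L^2$-norm by orthonormality of $\{e_j^0\}$, these combine into the master inequality
\begin{equation}\label{K3basic}
\|m(P^0)(x,\cdot)\|_{L^{\frac{2n}{n-2}}(M)}\ls\sum_{\mu\in\mathbb N}(1+\mu)^{n/2}\,\sup_{\mu\le t<\mu+1}|m(t)|,
\end{equation}
uniformly in $x$, for any bounded multiplier $m$ producing an $L^2$-function. Then each Sobolev-type bound \eqref{3.6}, \eqref{3.7}, \eqref{3.10}, \eqref{3.11}, \eqref{3.12} follows from \eqref{K3basic} applied to the relevant multiplier (and to its $\tau$-derivative), using throughout that on the supports occurring here, with $\tau\in[\la/2,10\la]$, the spectral variable $t=\la_j$ is $\approx\la$, so $|t^2-\tau^2|=|t-\tau|(t+\tau)\approx\la|t-\tau|$.

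\emph{Pieces localized near $\tau$.} For $K^\pm_{\tau,\ell}$ the multiplier $\frac{h(t)-c^\pm}{t^2-\tau^2}\beta(|t-\tau|\approx2^\ell)$ ($c^+=0$, $c^-=1$) is supported on the $\approx2^\ell$ unit bands where $|t-\tau|\approx2^\ell$, hence $|t^2-\tau^2|\approx2^\ell\la$ there; since $\tau\in I^\pm_{\ell,j}$ one checks $|t-\la|\gtrsim(j-1)2^\ell$ on that support for $j\ge2$ (the cases $j\in\{0,1\}$ being trivial), so the rapid decay of $h$ and of $h-1$ recorded before Lemma~\ref{hdiff'} gives $|h(t)-c^\pm|\ls(1+j)^{-N}$; hence $|m(t)|,\,|2^\ell\partial_\tau m(t)|\ls(1+j)^{-N}(2^\ell\la)^{-1}$ (a $\tau$-derivative can fall only on $(t^2-\tau^2)^{-1}$ or $\beta$, costing a factor $\ls2^{-\ell}$). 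Plugging into \eqref{K3basic}: $\ls2^\ell\cdot\la^{n/2}\cdot(1+j)^{-N}(2^\ell\la)^{-1}=(1+j)^{-N}\la^{n/2-1}$, which is \eqref{3.6}. For $K_{\tau,0}$ (where $2^\ell=1$) I would write $\frac{h(t)-h(\tau)}{t^2-\tau^2}=\frac1{t+\tau}\int_0^1h'(\tau+s(t-\tau))\,ds$ and invoke the rapid decay of $h'$ (and of $h''$ for the $\tau$-derivative) with $|t-\la|\approx j$ on the support, obtaining $|m(t)|,\,|\partial_\tau m(t)|\ls(1+j)^{-N}/\la$, hence \eqref{3.7}. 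For $R_{\tau,\ell}$ there is no $h$-factor, only $|m(t)|,\,|2^\ell\partial_\tau m(t)|\ls(2^\ell\la)^{-1}$ on the $\approx2^\ell$ bands, so \eqref{K3basic} gives $\ls\la^{n/2-1}$ with no $j$-gain, as asserted in \eqref{3.10}.

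\emph{Pieces localized away from $\tau$.} For $K^+_{\tau,\infty}$ ($\tau\in(\la,10\la]$) and for the $h$-part $\sum\frac{h(\la_j)}{\la_j^2-\tau_k^2}\beta(|\la_j-\tau_k|\gs\la)e_j^0(x)e_j^0(\cdot)$ of $\tilde K^-_{\tau,\infty}$ ($\tau\in[\la/2,\la)$), the multiplier $\frac{h(t)}{t^2-\tau^2}\beta(|t-\tau|\gs\la)$ satisfies $|t^2-\tau^2|\gs\la^2$ on its support (as $|t-\tau|\gs\la$ and $t+\tau\ge\tau\gs\la$), so $|m(t)|\ls\la^{-2}$ always and $|m(t)|\ls\la^{-2}(1+|t-\la|)^{-N}$ for $t\ge\la$; since the part of the support with $t<\la$ comprises $\ls\la$ unit bands, \eqref{K3basic} gives $\ls\la^{-2}(\la^{n/2+1}+\la^{n/2})=\la^{n/2-1}$, and the $\la\partial_\tau$-version is identical (each $\tau$-derivative of $(t^2-\tau^2)^{-1}$, $\beta$, or $\eta$ saves $\ls\la^{-1}$). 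The remaining, non-decaying part $-\sum\frac{\eta(\la_j\ls\tau)}{\la_j^2-\tau_k^2}\beta(|\la_j-\tau_k|\gs\la)e_j^0(x)e_j^0(\cdot)$ of $\tilde K^-_{\tau,\infty}$ has multiplier supported on $\{t\ls2\tau\ls\la\}$ with $|t^2-\tau^2|\gs\la^2$, i.e.\ $\ls\la$ unit bands with $|m(t)|\ls\la^{-2}$, so \eqref{K3basic} gives $\ls\la\cdot\la^{n/2}\cdot\la^{-2}=\la^{n/2-1}$; this proves \eqref{3.11} and \eqref{3.12}. For \eqref{3.13} and \eqref{3.14} I would instead use Lemma~\ref{pdo}: the splitting $K^-_{\tau,\infty}=H^-_{\tau,\infty}+\tilde K^-_{\tau,\infty}$ is arranged precisely so that the piece with unbounded $t$-support and no $h$-factor, $H^-_{\tau,\infty}=-\sum\frac{\eta(\la_j\gs\tau)}{\la_j^2-\tau_k^2}\beta(|\la_j-\tau_k|\gs\la)e_j^0(x)e_j^0(\cdot)$, is a genuine pseudodifferential operator: on $\supp\eta(t\gs\tau)=\{t\ge2\tau\}$ one has $\beta(|t-\tau|\gs\la)\equiv1$, so $H^-_{\tau,\infty}=-m(P^0)$ with $m(t)=\frac{\eta(t\gs\tau)}{t^2-\tau^2}$, and after the rescaling $t=\tau\nu$ the symbol $\nu\mapsto\tau^2m(\tau\nu)=\frac{\eta(\nu\gs1)}{\nu^2-1}$ lies in $S^{-2}$ with bounds uniform in $\tau$ (up to a harmless modification for small $\nu$ not affecting the spectrum of $P^0$); then \eqref{withla} with $\la$ replaced by $\tau\approx\la$ gives \eqref{3.13}. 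Likewise $R_{\tau,\infty}=m(P^0)$ with $m(t)=\frac1{t^2-\tau^2}\beta(|t-\tau|\gs\la)$, whose rescaling $\nu\mapsto\frac{\beta(|\nu-1|\gs\la/\tau)}{\nu^2-1}$ excludes a fixed neighborhood of $\nu=1$ (as $\la/\tau\gtrsim1/10$ for $\tau\le10\la$) and so lies in $S^{-2}$ uniformly, yielding \eqref{3.14} by \eqref{withla}.

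\emph{Main obstacle.} The individual multiplier estimates are routine; the delicate points are bookkeeping. First, one must correctly compare the support of $\beta(|t-\tau|\approx2^\ell)$, centred at $\tau$, with the concentration point $t=\la$ of the decay of $h$ --- which is exactly what the intervals $I^\pm_{\ell,j}$ encode --- so as to extract the factor $(1+j)^{-N}$ in \eqref{3.6}--\eqref{3.7}. Second, one must recognize that the parts of $K^\pm_{\tau,\infty}$ whose multipliers are not summable over $t=\la_j$ (no $h$-decay, unbounded $t$-support) cannot be treated via \eqref{K3basic} --- the frequency sum would diverge --- and instead must be split off and estimated as honest order-$(-2)$ pseudodifferential operators, for which the gain $|t^2-\tau^2|\gtrsim\la^2$ on their supports is essential.
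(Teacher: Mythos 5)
Your proof is correct and follows essentially the same route as the paper: spectral projection bounds from Lemma \ref{sogge888} at $p=\tfrac{2n}{n-2}$ combined with pointwise size estimates for the spectrally localized multipliers (yielding the $(1+j)^{-N}$ gain from the rapid decay of $h$ and $h-1$ relative to the intervals $I^{\pm}_{\ell,j}$), together with the pseudodifferential kernel bounds of Lemma \ref{pdo} for $H^-_{\tau,\infty}$ and $R_{\tau,\infty}$. The only cosmetic differences are that you package the projection argument as a unit-band sum (which reproduces the paper's width-$2^\ell$ window bound exactly, since Cauchy--Schwarz across the $2^\ell$ bands compensates) and that you apply the PDO calculus directly to the rescaled symbol of $R_{\tau,\infty}$ rather than splitting off $R^0_{\tau,\infty}$ as the paper does.
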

We postpone the proof of Lemma~\ref{K3} to the Appendix.

	By Lemma~\ref{delta2} with
		$\delta=2^\ell$ along with the Lemma~\ref{K3} and Lemma~\ref{bssbound}, we have
		\begin{align}\label{3.15}
			\Bigl| &\sum_{\tau_k\in I^\pm_{\ell,j}\cap[\la/2,10\la]}
			\int K^\pm_{\tau_k,\ell}(x,y)e_{\tau_k}(x)e_{\tau_k}(y)
			V(y)\, dy\Bigr|
			\\
			&\lesssim \|V\|_{L^{n}(M)}\cdot \sup_x
			\Bigl( \|K^\pm_{\la\pm j2^\ell,\ell}(x, \, \cdot )\|_{L^{\frac{2n}{n-2}}(M)}+\int_{I^\pm_{\ell,j}}\bigl\|\tfrac\partial{\partial s}K^\pm_{s,\ell}(x, \, \cdot)\|_{L^{\frac{2n}{n-2}}(M)} \, ds
			\Bigr) \notag
			\\
			&\qquad \qquad \qquad \qquad \qquad \times  \bigl(\sum_{\tau_k\in I^\pm_{\ell,j}\cap [\la/2,10\la]}
			|e_{\tau_k}(x)|^2\bigr)^{1/2}
			\notag
			\\
			&\lesssim \la^{\frac{n}2-1}(1+j)^{-N}
			\cdot \la^\frac{n-1}22^{\ell/2}\cdot \|V\|_{L^{n}(M)}
			\notag
			\\
			&\lesssim \la^{n-\frac32}2^{\ell/2}(1+j)^{-N} \cdot \|V\|_{L^{n}(M)}.  \notag
		\end{align}
		If we sum over $j=0,1,2,\dots$, we see that \eqref{3.15} yields
		\begin{multline}\label{3.17}
			\Big| \sum_{\la <\tau_k\le 10\la}
			\int K^+_{\tau_k,\ell}(x,y) e_{\tau_k}(x)e_{\tau_k}(y)
			\, V(y)\, dy\Bigr|
			\\
			+\Big| \sum_{\la/2 \le \tau_k\le\la}
			\int K^-_{\tau_k,\ell}(x,y) e_{\tau_k}(x)e_{\tau_k}(y)
			\, V(y)\, dy\Bigr|
			\lesssim \|V\|_{L^{n}(M)} \la^{n-\frac32}2^{\ell/2}.
		\end{multline}
		If we take $\delta=1$ in Lemma \ref{delta2}, this argument also gives
		\begin{multline}\label{3.19}
			\Big| \sum_{\la <\tau_k\le 10\la}
			\int K_{\tau_k, 0}(x,y) e_{\tau_k}(x)e_{\tau_k}(y)
			\, V(y)\, dy\Bigr|
			\\
			+\Big| \sum_{\la/2 \le \tau_k\le\la}
			\int K_{\tau_k, 0}(x,y) e_{\tau_k}(x)e_{\tau_k}(y)
			\, V(y)\, dy\Bigr|
			\lesssim \|V\|_{L^{n}(M)} \la^{n-\frac32}.
		\end{multline}
		If we take $\delta=\la$ in Lemma \ref{delta2}, this argument also gives
		\begin{align}\label{3.20}
			\Big|\sum_{\la <\tau_k\le 10\la}
			\int K^+_{\tau_k,\infty}(x,y) e_{\tau_k}(x)e_{\tau_k}(y)V(y)dy\Big|\ls \|V\|_{L^{n}(M)} \la^{n-1}.
		\end{align}
		Similarly,
		\begin{align}\label{3.20'}
			\Big| \sum_{\la/2 <\tau_k\le \la}
			\int \tilde K^-_{\tau_k,\infty}(x,y) e_{\tau_k}(x)e_{\tau_k}(y)
			\, V(y)\, dy\Bigr|	\lesssim \|V\|_{L^{n}(M)} \la^{n-1}.
		\end{align}
		By \eqref{3.10}, if we repeat the argument above, we have
		\begin{multline}\label{3.21}
			\Big| \sum_{\la <\tau_k\le 10\la}
			\int R_{\tau_k, \ell}(x,y)  h(\tau_k)e_{\tau_k}(x)e_{\tau_k}(y)
			\, V(y)\, dy\Bigr|
			\\
			+\Big| \sum_{\la/2 \le \tau_k\le\la}
			\int R_{\tau_k, \ell}(x,y)(1-h(\tau_k)) e_{\tau_k}(x)e_{\tau_k}(y)
			\, V(y)\, dy\Bigr|
			\lesssim \|V\|_{L^{n}(M)} \la^{n-\frac32}2^{\ell/2}.
		\end{multline}
		Moreover, by using \eqref{3.13}, we have $\|H_{\tau,\infty}^-(x,\cdot)\|_{L^{\frac{n}{n-1}}(M)}\ls \la^{-1}$ for $\tau\in[\la/2,10\la]$, and then
		\begin{align*}
			\Big| \sum_{\la/2 \le \tau_k\le\la}\int H_{\tau_k, \infty}^-(x,y)&e_{\tau_k}(x)e_{\tau_k}(y)V (y)dy\Bigr| \\&\ls \|V\|_{L^n(M)}\cdot \la^{-1}\|\sum_{\tau_k\in[\la/2,10\la]}|e_{\tau_k}(x)e_{\tau_k}(\cdot)|\|_\infty
			\\ &\ls \|V\|_{L^n(M)}\la^{n-1}.
		\end{align*}
		Similarly,
		\begin{multline}\label{3.22}
			\Big| \sum_{\la <\tau_k\le 10\la}
			\int R_{\tau_k, \infty}(x,y)  h(\tau_k)e_{\tau_k}(x)e_{\tau_k}(y)
			\, V(y)\, dy\Bigr|
			\\
			+\Big| \sum_{\la/2 \le \tau_k\le\la}
			\int R_{\tau_k, \infty}(x,y)(1-h(\tau_k)) e_{\tau_k}(x)e_{\tau_k}(y)
			\,V (y)\, dy\Bigr|
			\lesssim \|V\|_{L^n(M)}\la^{n-1}.
		\end{multline}
		Hence, using the estimates above and summing over $\ell$, we obtain \eqref{mid}.
	\section{Appendix: Proof of Lemmas}
	We give the proof of Lemma \ref{K3} and Lemma \ref{delta2}. They are essentially analogous to the lemmas in \cite{hs}, but we prove them here for the sake of completeness.
		\begin{proof}[Proof of Lemma~\ref{K3}]
		To prove Lemma~\ref{K3}, we shall need the fact that, by Lemma~\ref{sogge888}, for any fixed $\ell$ with $1\le2^\ell\le \la/100$, we have the following spectral projection estimates (Lemma \ref{sogge888})
		\begin{equation}\label{sper} \|\1_{[\la,\la+2^\ell)}(P^0)\|_{L^2\to L^{\frac{2n}{n-2}}}\ls 2^{\ell/2}\la^{1/2}.
\end{equation}

		To prove the first inequality we note that if $\tau\in I^\pm_{\ell,j}\cap[1,10\la]$ then
		$|\la_i-\tau|\le 2^{\ell+1}$ if $\beta(2^{-\ell}(\la_i-\tau))\ne 0$, and, in this case,
		we also have $h(\la_i)-1=O((1+j)^{-N})$ if $\tau\in I^-_{\ell,j}$ and $h(\la_i)=O((1+j)^{-N})$ if
		$\tau\in I^+_{\ell,j}$.  Therefore, we have
		\begin{align*}
			\bigl\|K^\pm_{\tau,\ell}(\, \cdot \, ,y)\bigr\|_{L^{\frac{2n}{n-2}}(M)} &\ls  2^{\ell/2}\la^{1/2}\bigl\|K^\pm_{\tau,\ell}(\, \cdot \, ,y)\bigr\|_{L^2(M)}\\&\lesssim
			 2^{\ell/2}\la^{1/2}(1+j)^{-N}2^{-\ell}\la^{-1} \, \bigl(\sum_{\{i: \, |\la_i-\tau|\le 2^{\ell+1}\}} |e^0_i(y)|^2\bigr)^{1/2}
			\\
			&\lesssim (1+j)^{-N}2^{-\ell/2}\la^{-1/2}\bigl(\sum_{\mu\in {\mathbb N}: |\mu-\tau|\le 2^{\ell+1}}
			\mu^{n-1}\bigr)^{1/2}
			\\
			&\le (1+j)^{-N}\la^{\frac{n}2}\la^{-1}.
		\end{align*}
		In particular, if $n=2$, the same argument implies that 
		\begin{align*}
			\bigl\|K^\pm_{\tau,\ell}(\, \cdot \, ,y)\bigr\|_{L^{\infty}(M)} \le (1+j)^{-N},
		\end{align*}
		which proves the first part of \eqref{3.6}.
		 The other inequality
		in \eqref{3.6} follows from this argument since
		$$\frac\partial{\partial \tau}
		\frac{\beta(|\la_i-\tau|\approx 2^{\ell} )}{\la_i^2-\tau^2}=O(2^{-2\ell}\la^{-1}),
		$$
		due to the fact that we are assuming that $2^\ell \le \la/100$.
		
		This argument also gives us \eqref{3.7} if we use the fact that
		$\tau\to (h(\tau)-h(\mu))/(\tau^2-\mu^2)$ is smooth and use the fact that
		$$\partial_\tau^k \bigl(\beta(|\la_i-\tau|\ls 1) (h(\la_i)-h(\tau))/(\la_i-\tau)\bigr)=O((1+j)^{-N}), \, \, k=0,1,
		\, \, \tau\in I_{0,j}^\pm.$$
		
		To prove \eqref{3.11} we use the fact that for $k=0,1$ we have for $\tau\in (\la/2,10\la]$
		$$\Bigl| \, \Bigl(\frac\partial{\partial \tau}\Bigr)^k
		\Bigl(\frac{\beta(|\la_i-\tau|\gs \la)}{\la_i^2-\tau^2}
		\Bigr)  h(\la_i)\, \Bigr|
		\lesssim
		\begin{cases}
			\la^{-2-k} \quad \text{if } \, \, \la_i\le \la
			\\
			\la^{-2-k}(1+|\la_i-\la|)^{-N} \quad \text{if } \, \, \la_i>\la.
		\end{cases}
		$$
		Thus for $k=0,1$, by \eqref{sper}
		\begin{align*}
			\bigl\| (\la \partial_\tau)^k \, K^+_{\tau,\infty}(\, \cdot \, ,y)\bigr\|_{L^{\frac{2n}{n-2}}(M)}
			&\lesssim \la\cdot(\sum_{\la_j\le2\la}\la^{-4}|e_j^0(y)|^2)^\frac12+\sum_{s\in \mathbb{N}:2^s>2\la}2^s(\sum_{\la_j\approx 2^s}2^{-4s}(1+|\la_j-\la|)^{-N}|e_j^0(y)|^2)^\frac12
			\\
			&\lesssim \la^{-1+\frac{n}2},
		\end{align*}
		as desired if $N>2n$.
		Similarly, 
		$\tilde K^-_{\tau,\infty}$ satisfies the bounds in \eqref{3.12}.
		
		Moreover, we can conclude from Lemma~\ref{pdo} that $H^-_{\tau,\infty}$ satisfies the bounds
		in \eqref{3.13}.  It just remains to prove the bounds in \eqref{3.10} for the $R_{\tau,\ell}(x,y)$ and that in
		\eqref{3.14} for $R_{\tau,\infty}(x,y)$.  The former just follows from the proof of \eqref{3.6}.

		To prove the remaining inequality, \eqref{3.14}, we note that 
		$$R_{\tau,\infty}(x,y)=R^0_{\tau,\infty}(x,y)-H_{\tau,,\infty}^-(x,y),$$
		if
		$$R^0_{\tau,\infty}(x,y)=\sum_i \eta(\la_i\ls \tau)
		\frac{\beta(|\la_i-\tau|\gs \la)}{\la_i^2-\tau^2}
		\, e^0_i(x)e^0_i(y).$$
		Since $H_{\tau,,\infty}^-$ satisfies \eqref{3.13}, and Lemma~\ref{pdo} shows that 
		$$|R^0_{\tau,\infty}(x,y)|\lesssim \tau^{n-2} \bigl(1+\tau d_g(x,y)\bigr)^{-N}$$
		we conclude that \eqref{3.14} is valid.
	\end{proof}
	
\begin{proof}[Proof of Lemma \ref{delta2}]
	We shall use the fact that
	$$m(\delta_{\tau_k},y)=m(0,y)+\int_0^{\delta}
	\1_{[0,\delta_{\tau_k}]}(s)\, \tfrac\partial{\partial s}
	m(s,y) \, ds,$$
	where $\1_{[0,\delta_{\tau_k}]}(s)$ is the indicator function of the the interval $[0,\delta_{\tau_k}]
	\subset [0,\delta]$.
	Therefore, by H\"older's inequality and Minkowski's inequality,
	the left side of \eqref{2.26} is dominated by $\|V\|_{L^n(M)}$ times
	\begin{align*}
		&\big(\int_M \bigl|\, m(0,y)\cdot \sum_{\tau_k\in I}a_k
		e_{\tau_k}(y)\, \bigr|^{\frac{n}{n-1}} \, dy\big)^{\frac{n-1}{n}} \\
		+& \big(\int_M \bigl| \, \sum_{\tau_k\in I}\int_0^\delta
		\1_{[0,\delta_{\tau_k}]}(s)\tfrac\partial{\partial s}
		m(s,y) a_k e_{\tau_k}(y)\, ds \, \bigr|^{\frac{n}{n-1}} \, dy \big)^{\frac{n-1}{n}}
		\\
		&\le
		\|m(0,\, \cdot\, )\|_{\frac{2n}{n-2}} \cdot
		\|\sum_{\tau_k\in I}a_ke_{\tau_k}\|_{2}
		+\int_0^\delta\bigl( \,
		\bigl\|\tfrac\partial{\partial s}
		m(s,\, \cdot \, )\bigr\|_{\frac{2n}{n-2}} \cdot \bigl\| \sum_{\tau_k\in I}\1_{[0,\delta_{\tau_k}]}(s)a_ke_{\tau_k}\, \bigr\|_{2}\, \bigr)\, ds \\
		&\le
		\|m(0,\, \cdot\, )\|_{\frac{2n}{n-2}} \cdot
		\|\sum_{\tau_k\in I}a_ke_{\tau_k}\|_{2}
		+\int_0^\delta \,
		\|\tfrac\partial{\partial s}
		m(s,\, \cdot \, )\|_{\frac{2n}{n-2}} ds \cdot \sup_{s\in [0, \delta]}\bigl\| \sum_{\tau_k\in I}\1_{[0,\delta_{\tau_k}]}(s)a_ke_{\tau_k}\, \bigr\|_{2}\,\,
		\\
		&\le \Bigl(\, \|m(0, \, \cdot\, )\|_{\frac{2n}{n-2}}
		+\int_0^\delta \bigl\| \tfrac\partial{\partial s}
		m(s, \, \cdot\, )\bigr\|_{\frac{2n}{n-2}}\, ds
		\, \Bigr) \times   (\sum_{\tau_k\in I}|a_k|^2)^\frac12
	\end{align*}
	as desired.
\end{proof}
	 

		\bibliographystyle{plain}
		
	\end{document}